\documentclass[a4paper]{article}

\usepackage[margin=1in]{geometry} % full-width

\usepackage{amsmath}
\usepackage{amsthm}
\usepackage{amssymb}
\usepackage[dvipsnames]{xcolor}
\usepackage{comment}
\usepackage{graphicx}
\usepackage{multirow}
\usepackage{mathtools}

\mathtoolsset{showonlyrefs}

\usepackage[utf8]{inputenc}
\usepackage{hyperref}
\hypersetup{
	unicode,
	pdfauthor={Author One, Author Two, Author Three},
	pdftitle={A simple article template},
	pdfsubject={A simple article template},
	pdfkeywords={article, template, simple},
	pdfproducer={LaTeX},
	pdfcreator={pdflatex}
}

\usepackage[sort&compress,numbers,square]{natbib}
\theoremstyle{plain}
\newtheorem{theorem}{Theorem}
\newtheorem{corollary}[theorem]{Corollary}

\newtheorem{proposition}[theorem]{Proposition}

\theoremstyle{definition}
\newtheorem{definition}[theorem]{Definition}

\newtheorem{remark}[theorem]{Remark}

\newcommand{\NN}{\mathbb{N}}
\newcommand{\ZZ}{\mathbb{Z}}
\newcommand{\RR}{\mathbb{R}}

\usepackage[colorinlistoftodos,prependcaption,textsize=small]{todonotes}

\definecolor{greenpie}{rgb}{0.69, 0.95, 0.76}

\usepackage{mathrsfs} % for \mathscr command

\title{A method for generating multivariate multiple orthogonal polynomials}
\author{Lidia Fernández$^\star$ \and Juan Antonio Villegas$^\star$}

\date{
    \texttt{lidiafr@ugr.es},\qquad \texttt{jantoniovr@ugr.es}\\[2ex]%
	$^\star$Departamento de Matemática Aplicada and Instituto de Matemáticas (IMAG). Universidad de Granada \\[2ex]%
}

\begin{document}
	\maketitle
	
	\begin{abstract}

    In the framework of multiple orthogonal polynomials (MOPs) and their extension to the multivariate setting, a unified mechanism for generating structured families of multiple orthogonal polynomials across diverse multidimensional domains is still lacking. In this work, we bridge this gap by developing an extended Koornwinder-type methodology for constructing multivariate multiple orthogonal polynomials. We introduce two distinct coupling schemes: combining a univariate MOPs family with a standard orthogonal family, which enables the definition of both Type~I and Type~II bivariate systems along with their dual biorthogonality relations, and coupling two univariate MOPs families to form Type~II multivariate systems.
    
    Using this general framework, we provide the first explicit formulations of multiple orthogonal polynomials on a variety of bivariate regions, including bounded domains such as a parabolic domain and the square $[0,1]^2$, non-standard unbounded geometries like the positive quadrant $(\mathbb{R}_0^+)^2$ and the wedge $\mathbb{V}^2$, as well as new constructions on the triangle $T$. Furthermore, explicit multiple orthogonal systems on the $d$-dimensional simplex $T^d$ and on the $d$-dimensional cone $\mathbb{V}^d$ are given.\medskip
		
	\noindent\textbf{Keywords:} Multiple orthogonal polynomials, multivariate orthogonal polynomials, Multivariate multiple orthogonality, Koornwinder's construction, bivariate domains, simplex, cone.

        \noindent\textbf{MSC: } 33C45, 33C50, 33C55, 42C05.
        
        \noindent\textbf{Corresponding author: } Juan Antonio Villegas.
        
	\end{abstract}

	%\tableofcontents

    \section{Introduction}\label{sec:intro}

    Multiple orthogonal polynomials (MOP) arise when the classical notion of orthogonality is extended to settings involving several measures instead of a single one \cite{Apt98, Ism05, VA20}. In this framework, a polynomial satisfies a collection of orthogonality conditions with respect to different measures, organized according to a multi-index. This naturally leads to Type~I and Type~II formulations, which satisfy a biorthogonality relation and extend many classical results from the theory of Orthogonal Polynomials (OP), see Section~\ref{sec:preliminaries} or \cite[Chapter~23]{Ism05}, \cite[Section~3]{VA20} for a more detailed introduction. \medskip
    
    While univariate multiple orthogonality and classical multivariate orthogonality \cite{DX14, Koo75, Lar59, Sue99} have long been subjects of active investigation, extensions of multiple orthogonality to other kinds of domains remain a comparatively young field. For instance, in \cite{CDO15, KV24a, KV24b, MVA08}, MOPs  on the unit circle and their properties are addressed. Regarding the extensions to the multivariate setting, early foundations were established in \cite{FV26a}, where a general framework for bivariate MOPs  on the whole lattice was introduced, followed by recent advances along the step-line \cite{MRW27}. Besides, Rodrigues-type formulas on the simplex were developed, along with hypergeometric expressions and applications to Hermite--Padé-type problems in \cite{FFV26}. Furthermore, definitions of MOPs  on the unit ball and radial domains were addressed in \cite{FV26b}.\medskip

    To date, several methodologies and definitions of MOPs  on the most explored domains in bivariate OP theory have been given. However, despite the recent progress in multivariate multiple orthogonality, there has been a lack of a general yet simple mechanism to generate structured MOPs families across diverse multidimensional domains.
    \medskip

    The principal contribution of this work is the development of a new framework for constructing multivariate MOPs  via an extension of the techniques described by  Koornwinder in \cite{Koo75}.    
    In particular, we present two methodologies for generating MOPs  on planar domains by coupling two univariate families of (multiple) orthogonal polynomials. Using these methods, we construct several families of multiple orthogonal polynomials on planar domains (see Figure~\ref{fig:domains}). In particular, we provide the first explicit constructions of MOPs  on a parabolic domain, in the positive quadrant, and on a wedge. Furthermore, we provide novel constructions of MOPs  on the triangle and on the $d$-dimensional simplex which build upon the earlier definitions in \cite{FFV26}, and introduce the first definition of multiple orthogonal polynomials on the $d$-dimensional cone. \medskip

    The structure of the manuscript is as follows: The preliminary tools, such as an introduction to multiple orthogonal polynomials, some classical families of both standard and multiple orthogonal polynomials, and an explanation of Koornwinder's construction for bivariate OPs are presented in Section~\ref{sec:preliminaries}. In Section~\ref{sec:koornwinder}, Koornwinder's construction is extended to the multiple setting, providing two different ways of combining univariate orthogonal polynomials, together with their respective advantages, limitations and main results of both approaches. Next, in Section~\ref{sec:examples} we employ the methods introduced above to give explicit constructions for MOPs  on a parabolic domain, the square $[0,1]^2$, the positive quadrant $(\mathbb R_0^+)^2$, the wedge $\mathbb V^2$ and the triangle $T$. Finally, in Sections~\ref{sec:MOP-simplex}~and~\ref{sec:MOP-cone}, leveraging existing extensions of Koornwinder's method, we provide MOPs  on the $d$-variate simplex $T^d$ and the cone $\mathbb V^d$, respectively. The manuscript concludes with the main conclusions and open problems in Section~\ref{sec:conclusions}.

    \section{Preliminaries}
    \label{sec:preliminaries}
    
    Before introducing the main novel contributions of this work, in this section we collect the necessary background material, fundamental definitions, and key properties of multiple orthogonal polynomials  in one variable, as well as bivariate orthogonal systems generated via Koornwinder's classical construction.
    
    \subsection{Multiple Orthogonal Polynomials}
    \label{subsec:mops_general}
    
    Multiple orthogonal polynomials (MOPs) are a natural extension of standard orthogonal polynomials where the orthogonality conditions are distributed over a vector of $r \in\NN_0$ weight functions \cite{Apt98, Ism05, MFVA16, VA06, VA20}. Throughout this paper, let $r$ denote the number of weights, and let $\vec{n} = (n_1, \dots, n_r) \in \mathbb{N}_0^r$ be a multi-index of total length $|\vec{n}| := n_1+\cdots+n_r$.
    
    Let $\{\omega_1, \dots, \omega_r\}$ be a collection of positive weight functions supported on $I_1, \dots, I_r \subseteq \mathbb{R}$, respectively, such that all moments
    \begin{equation}
    \int_{I_j} x^k \omega_j(x) \, \mathrm dx, \quad k \in \mathbb{N}_0, \quad j = 1, \dots, r,
    \end{equation}
    are finite.
    
    There are two primary types of multiple orthogonal polynomials associated with the multi-index $\vec{n}$ and the system of weights $\{\omega_1, \dots, \omega_r\}$:
    
    \begin{definition}[Type~I Multiple Orthogonal Polynomials]
    \label{def:typeI_mops}
    The Type~I multiple orthogonal polynomials associated with $\vec{n}$ and $\{\omega_1, \dots, \omega_r\}$ are $r$ polynomials $A_{\vec{n},1}, \dots, A_{\vec{n},r}$, where $\deg A_{\vec{n},j} \le n_j - 1$ for each $j = 1, \dots, r$, not all identically zero, satisfying the orthogonality relations
    \begin{equation}
    \label{eq:typeI_orthogonality}
    \sum_{j=1}^r \int_{I_j} A_{\vec{n},j}(x)\, x^k \, \omega_j(x) \, \mathrm dx = 0, \quad k = 0, 1, \dots, |\vec{n}| - 2,
    \end{equation}
    subject to a normalization condition, typically given by
    \begin{equation}
    \label{eq:typeI_normalization}
    \sum_{j=1}^r \int_{I_j} A_{\vec{n},j}(x)\,  x^{|\vec{n}|-1}\, \omega_j(x) \, \mathrm dx = 1.
    \end{equation}
    \end{definition}
    
    \begin{definition}[Type~II Multiple Orthogonal Polynomials]
    \label{def:typeII_mops}
    The Type~II multiple orthogonal polynomial associated with $\vec{n}$ and $\{\omega_1, \dots, \omega_r\}$ is the monic polynomial $P_{\vec{n}}(x)$ of total degree $|\vec{n}|$ satisfying the orthogonality conditions
    \begin{equation}
    \label{eq:typeII_orthogonality}
    \int_{I_j} P_{\vec{n}}(x)\, x^k \, \omega_j(x) \, \mathrm dx = 0, \quad k = 0, 1, \dots, n_j - 1, \quad j = 1, \dots, r.
    \end{equation}
    \end{definition}
    
    The existence and uniqueness of Type~I and Type~II polynomials depend directly on the structural properties of the weight system.
    
    \begin{definition}[Normal Multi-Indices and Perfect Systems]
    \label{def:normal_perfect}
    A multi-index $\vec{n} \in \mathbb{N}_0^r$ is called \emph{normal} with respect to the system of weights $\{\omega_1, \dots, \omega_r\}$ if the monic Type~II polynomial $P_{\vec{n}}(x)$ exists uniquely and has degree exactly $|\vec{n}|$ (or, equivalently, if the Type~I polynomials $A_{\vec{n},1}, \dots, A_{\vec{n},r}$ are uniquely determined up to a scalar factor).  
    
    If every multi-index $\vec{n} \in \mathbb{N}_0^r$ is normal, the system of weights $\{\omega_1, \dots, \omega_r\}$ is said to be \emph{perfect}.
    \end{definition}
    
    A prominent class of perfect systems consists of \emph{Algebraic Tchebycheff systems} (\emph{AT-systems}). The weights $\{\omega_1, \dots, \omega_r\}$ form an AT-system on a common interval $I$ if for every multi-index $\vec{n} \in \mathbb{N}_0^r$, any non-zero linear combination of the form
    \begin{equation}
    \sum_{j=1}^r q_{j}(x) \omega_j(x), \quad \text{with } \deg q_{j} \le n_j - 1,
    \end{equation}
    has at most $|\vec{n}| - 1$ zeros in $I$. Any AT-system is perfect \cite[Theorem~23.1.4]{Ism05}, ensuring full regularity for all multi-indices. Other classic examples of perfect systems include Angelesco systems and Nikishin systems \cite{ABVA03, Apt98, NS91, VA06}.
    
    Indeed, the concept of perfect system requires that the associated weights be `sufficiently different' from one another. A family of non-perfect systems is given in the following result.
    \begin{proposition}\label{prop:MOP-symmetric}
        Consider a system of positive weights $\omega_1,\dots,\omega_r$, all of them even and supported on a symmetric interval $[-a,a]$. Then, the associated system of measures is not perfect.
    \end{proposition}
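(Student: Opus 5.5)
To prove the statement it suffices to exhibit a single multi-index that is not normal. We will use $\vec n=(1,1,0,\dots,0)$, or more generally any $\vec n$ with $|\vec n|=2$ split over two weights. (If $r=1$ the system is a single weight and is trivially perfect, so we assume $r\ge 2$, as is implicit in the statement.) Normality of $\vec n$ means that the linear system for the coefficients of the monic $P_{\vec n}(x)=x^{|\vec n|}+\sum_{k<|\vec n|}c_kx^k$ is uniquely solvable. That system is square, so this is equivalent to the nonsingularity of the moment matrix
\[
M_{\vec n}=\Bigl(\int_{-a}^{a} x^{k+i}\,\omega_j(x)\,\mathrm dx\Bigr),
\]
whose rows are indexed by the pairs $(j,k)$ with $0\le k\le n_j-1$ and whose columns are indexed by $0\le i\le |\vec n|-1$. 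So we will show that $M_{\vec n}$ is singular for our choice of $\vec n$.

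First we use the symmetry. Because each $\omega_j$ is even on $[-a,a]$, all odd moments vanish: $\int_{-a}^a x^{m}\omega_j(x)\,\mathrm dx=0$ for $m$ odd. Now take $\vec n=e_1+e_2$. The rows are $(j,k)=(1,0)$ and $(2,0)$, and the columns are $i=0,1$. The resulting $2\times 2$ matrix is
\[
\begin{pmatrix} \mu_0^{(1)} & 0\\ \mu_0^{(2)} & 0\end{pmatrix},
\]
where $\mu_0^{(j)}=\int_{-a}^a\omega_j(x)\,\mathrm dx$. Its second column is zero, so the matrix is singular.

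Concretely, $P(x)=x^2+c_1x+c_0$ must satisfy $\int P\,\omega_1=\int P\,\omega_2=0$. Since the odd moments vanish, these conditions read
\[
\mu_2^{(j)}+c_0\,\mu_0^{(j)}=0,\qquad j=1,2,
\]
and the coefficient $c_1$ is completely free. Hence either no solution exists, or there are infinitely many (one for each value of $c_1$); in neither case is $P_{\vec n}$ uniquely determined. Therefore $\vec n$ is not normal, and the system is not perfect.

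For robustness we may also note the equivalent Type~I viewpoint. The function $A_1\omega_1+A_2\omega_2$, with constant $A_j$, is even, so orthogonality to $x^0$ alone leaves a one-parameter family of solutions, consistent with the failure of uniqueness above.

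There is no real obstacle in this argument. The only point requiring care is to state precisely that normality is equivalent to invertibility of the square moment system, so that a free parameter (or inconsistency) directly contradicts Definition~\ref{def:normal_perfect}.
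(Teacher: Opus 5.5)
Your argument is correct and is essentially the paper's proof. You use the same multi-index $\vec n=(1,1,0,\dots,0)$ and the same $2\times 2$ moment matrix, whose second column $(m_1^{(1)},m_1^{(2)})^{T}$ vanishes by evenness; your remark that $r\ge 2$ is implicitly required is a fair clarification.

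One small correction to your optional Type~I aside. The one-parameter family $(A_1,A_2)=c\,(\mu_0^{(2)},-\mu_0^{(1)})$ is exactly uniqueness up to a scalar factor, so it does not exhibit a failure of uniqueness. The real Type~I obstruction is that the normalization $\int_{-a}^{a} x\,(A_1\omega_1+A_2\omega_2)(x)\,\mathrm dx=1$ can never hold, because the integrand is odd.
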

    \begin{proof}
        To prove that a system is not perfect, it suffices to find a non-normal multi-index. For example, consider the multi-index $\vec n =(1,1,0,\dots,0)\in\NN^r_0$.

        Its associated matrix is
        
        $$
        M_{\vec n}=\begin{pmatrix}
            m_{0}^{(1)} & m_{1}^{(1)} \\
            m_{0}^{(2)} & m_{1}^{(2)} 
        \end{pmatrix},
        $$
        where $m_k^{(j)}$, $k\geq 0$, $1\leq j\leq r$ are the moments of the $j$-th measure. Since $\omega_j$ is an even function supported on a symmetric interval,  $m_{2k+1}^{(j)}=\int_{-a}^a x^{2k+1} \omega_j(x) \, dx=0$ for every $k$, and in particular $m_1^{(1)}=m_1^{(2)}=0$. Consequently, $\det(M_{\vec n})=0$ and $\vec n$ is not a normal multi-index.
    \end{proof}

    Type~I and Type~II multiple orthogonal polynomials are linked by a fundamental biorthogonality relation. Defining the Type~I function
    \begin{equation}
    \label{eq:typeI_function}
    Q_{\vec{n}}(x) := \sum_{j=1}^r A_{\vec{n},j}(x) \omega_j(x),
    \end{equation}
    and assuming the normalization \eqref{eq:typeI_normalization}, the duality between Type~I and Type~II systems is expressed as \cite[Theorem~23.1.6]{Ism05}, \cite{Apt98, VA20}:
    \begin{equation}
    \label{eq:biorthogonality}
     \int_{I_j} P_{\vec{n}}(x) Q_{\vec{m}}(x) \, \mathrm dx = 
    \begin{cases}
    0, & \text{if } \vec m\le\vec n\text{ (componentwise)} \\
    0, & \text{if } |\vec n|\leq |\vec m|-2 \\
    1, & \text{if } |\vec n|=|\vec m|-1,
    \end{cases}
    \end{equation}
    
    Having reviewed the general framework of multiple orthogonality, we now describe some of the most representative classical families of OPs and MOPs.

    \subsection{Univariate Classical Orthogonal Polynomials: Jacobi and Gegenbauer}
    \label{subsec:jacobi_gegenbauer}
    
    Before addressing multiple orthogonality, we recall the standard univariate Jacobi and Gegenbauer polynomials, as they serve as primary building blocks in multivariate constructions such as Koornwinder's scheme \cite{AS64, Sze59}.
    
    \subsubsection{Jacobi Polynomials}
    \label{subsubsec:jacobi_standard}
    
    The classical Jacobi polynomials, denoted by $P_n^{(\alpha,\beta)}(x)$, form a sequence of orthogonal polynomials on the standard interval $[-1,1]$ with respect to the weight function
    \begin{equation}
    \label{eq:jacobi_weight_standard}
    w^{(\alpha,\beta)}(x) = (1-x)^\alpha (1+x)^\beta, \quad x \in [-1,1],
    \end{equation}
    where $\alpha, \beta > -1$.
    
    Their orthogonality relation is given by
    \begin{equation}
    \label{eq:jacobi_orth_standard}
    \int_{-1}^1 P_n^{(\alpha,\beta)}(x)\, x^k \, (1-x)^\alpha (1+x)^\beta \, \mathrm dx = 0\qquad\text{ if } 0\leq k<n.
    \end{equation}
    
    For many applications (including shifted constructions on the unit interval $[0,1]$), it is convenient to map the domain via the linear transformation $x \longmapsto 1-2t$ for $t \in [0,1]$. Under this change of variable, the shifted Jacobi polynomials $\widetilde{P}_n^{(\alpha,\beta)}(t) := P_n^{(\alpha,\beta)}(1-2t)$ satisfy the orthogonality relation on $[0,1]$:
    \begin{equation}
    \label{eq:jacobi_orth_shifted}
    \int_{0}^1 \widetilde{P}_n^{(\alpha,\beta)}(t)\, t^k \, t^\alpha(1-t)^\beta  \, \mathrm dt =  0\qquad\text{ if } 0\leq k<n.
    \end{equation}
    Explicit formulas and representations for Jacobi polynomials can be found in \cite[Chapter~22]{AS64}.
    
    \subsubsection{Gegenbauer Polynomials}
    \label{subsubsec:gegenbauer_standard}
    
    Gegenbauer (or ultraspherical) polynomials, denoted by $C_n^{(\lambda)}(x)$, arise as a symmetric special case of Jacobi polynomials corresponding to equal parameters $\alpha = \beta = \lambda - 1/2$ with $\lambda > -1/2$ ($\lambda \neq 0$).
    
    The Gegenbauer polynomials are orthogonal on $[-1,1]$ with respect to the weight function
    \begin{equation}
    \label{eq:gegenbauer_weight}
    w^{(\lambda)}(x) = (1-x^2)^{\lambda - 1/2}, \quad x \in [-1,1],
    \end{equation}
    and satisfy the orthogonality condition
    \begin{equation}
    \label{eq:gegenbauer_orth}
    \int_{-1}^1 C_n^{(\lambda)}(x)\, x^k \, (1-x^2)^{\lambda - 1/2} \, \mathrm dx =  0\qquad\text{ if } 0\leq k<n.
    \end{equation}
    Moreover, as $[-1,1]$ is a symmetric interval and $w^{(\lambda)}$ is an even weight, \eqref{eq:gegenbauer_orth} also holds whenever $k-n$ is odd.
    
    Special cases of Gegenbauer polynomials include Legendre polynomials ($\lambda = 1/2$) and Chebyshev polynomials of the first and second kinds ($\lambda \to 0$ and $\lambda = 1$, respectively), making Gegenbauer polynomials crucial in numerical integration and harmonic analysis on spheres and balls \cite{DX13, Sze59}.

    Observe that, owing to Proposition~\ref{prop:MOP-symmetric}, it is not possible to have a perfect system of `multiple Gegenbauer polynomials', as the weight \eqref{eq:gegenbauer_weight} is symmetric and supported on a symmetric interval.
    
    \subsection{Families of Multiple Orthogonal Polynomials}
    \label{subsec:mop_families}
    
    In this subsection, we present the classical Jacobi--Pi\~neiro and multiple Laguerre (first kind) polynomials.
    
    \subsubsection{Jacobi--Pi\~neiro Polynomials}
    \label{subsubsec:jacobi_pineiro}
    
    Jacobi--Pi\~neiro polynomials were originally introduced by Pi\~neiro \cite{Pin87} in the context of simultaneous Hermite--Pad\'e approximation for systems of hypergeometric series. They play a prominent role in number theory (e.g., in irrationality proofs for values of the Riemann zeta function \cite{Fis04}), stochastic processes, urn models, and Markov chains \cite{BDFMAF23, GI21}, as well as in electrostatic models and random matrix theory \cite{BK04, MFOSL22, NVA16}.
    
    They are MOPs  defined on $I = [0,1]$ that extend the shifted Jacobi polynomials $\{\widetilde P^{(\alpha,\beta)}_n\}$ introduced in Section~\ref{subsubsec:jacobi_standard} to the multiple setting. Depending on whether vector or scalar exponents are used in the Jacobi factors, three canonical configurations arise:
    
    \paragraph{Case 1: Exponents $x^{\alpha_j} (1-x)^\beta$.}
    Consider $r$ weight functions on $(0,1)$ defined by
    \begin{equation}
    \label{eq:jp_weights_alpha_vec}
    \omega_j(x) = x^{\alpha_j} (1-x)^\beta, \quad j = 1, \dots, r,
    \end{equation}
    with parameters $\alpha_j > -1$, $j = 1, \dots, r$, and $\beta > -1$.
    
    When the parameters satisfy $\alpha_j - \alpha_k \notin \mathbb{Z}$ for all $j \neq k$, the weight system $\{\omega_1, \dots, \omega_r\}$ forms an AT-system on $(0,1)$ \cite{BCVA05, BDFM24}. Consequently, the system is perfect, guaranteeing the existence and uniqueness of the monic Type~II Jacobi--Pi\~neiro polynomial $P_{\vec{n}}^{(\vec{\alpha},\beta)}(x)$ of degree $|\vec{n}|$ for every multi-index $\vec{n} \in \mathbb{N}_0^r$.
    
    Its Type~II orthogonality equations are explicitly given by:
    \begin{equation}
    \label{eq:jp_orth_alpha_vec}
    \int_{0}^1  P_{\vec{n}}^{(\vec{\alpha},\beta)}(x)\, x^k \, x^{\alpha_j} (1-x)^\beta \, \mathrm dx = 0, \quad k = 0, 1, \dots, n_j - 1, \quad j = 1, \dots, r.
    \end{equation}

    Type~II Jacobi--Pi\~neiro polynomials can be efficiently computed by using the Rodrigues-type formula \cite{ABVA03}:
    \begin{equation}
        \label{eq:rodrigues-jp}
        P^{(\vec \alpha, \beta)}_{\vec n}(x) = \left(\dfrac{1}{\omega_r(x)}\dfrac{d^{\,n_r}}{dx^{n_r}}\omega_r(x)\right)\cdots\left(\dfrac{1}{\omega_1(x)}\dfrac{d^{\,n_1}}{dx^{n_1}}\omega_1(x)\right)\phi^{|\vec n|}(x),
    \end{equation}
    where $\phi(x)=x(x-1)$.
    
    \paragraph{Case 2: Exponents $x^\alpha (1-x)^{\beta_j}$.}
    Consider the weight functions on $(0,1)$ given by
    \begin{equation}
    \label{eq:jp_weights_beta_vec}
    \omega_j(x) = x^\alpha (1-x)^{\beta_j}, \quad j = 1, \dots, r,
    \end{equation}
    where $\alpha > -1$ and $\beta_j > -1$, $j = 1, \dots, r$. 
    
    Provided $\beta_j - \beta_k \notin \mathbb{Z}$ for $j \neq k$, the weights form an AT-system on $(0,1)$. The monic Type~II polynomial $P_{\vec{n}}^{(\alpha,\vec{\beta})}(x)$ has degree $|\vec{n}|$ and satisfies:
    \begin{equation}
    \label{eq:jp_orth_beta_vec}
    \int_{0}^1 P_{\vec{n}}^{(\alpha,\vec{\beta})}(x)\, x^k \, x^\alpha (1-x)^{\beta_j} \, \mathrm dx = 0, \quad k = 0, 1, \dots, n_j - 1, \quad j = 1, \dots, r.
    \end{equation}

    It is worth emphasizing that the polynomials in Case~2 are essentially identical to those in Case~1 under the linear change of variable $x \longmapsto 1-x$. Specifically, we have the relation $$P_{\vec{n}}^{(\alpha,\vec{\beta})}(x) = (-1)^{|\vec{n}|} P_{\vec{n}}^{(\vec{\beta},\alpha)}(1-x).$$ 
    
    \paragraph{Case 3: Exponents $x^{\alpha_j} (1-x)^{\beta_j}$.}
    More generally, consider the system of weight functions on $(0,1)$ defined by
    \begin{equation}
    \label{eq:jp_weights_both_vec}
    \omega_j(x) = x^{\alpha_j} (1-x)^{\beta_j}, \quad j = 1, \dots, r,
    \end{equation}
    with $\alpha_j > -1$ and $\beta_j > -1$ for $j = 1, \dots, r$.
    
    Under appropriate conditions on the parameter vectors $\vec{\alpha}$ and $\vec{\beta}$ ensuring perfection, which holds when either $\alpha_j - \alpha_k \notin \mathbb{Z}$ or $\beta_j - \beta_k \notin \mathbb{Z}$, the monic Type~II polynomial of degree $|\vec{n}|$, denoted by $P_{\vec{n}}^{(\vec{\alpha},\vec{\beta})}(x)$, is uniquely characterized by:
    \begin{equation}
    \label{eq:jp_orth_both_vec}
    \int_{0}^1 P_{\vec{n}}^{(\vec{\alpha},\vec{\beta})}(x)\, x^k  \, x^{\alpha_j} (1-x)^{\beta_j} \, \mathrm dx = 0, \quad k = 0, 1, \dots, n_j - 1, \quad j = 1, \dots, r.
    \end{equation}
    
    In fact, taking $\beta_1=\cdots=\beta_r=\beta$ corresponds to Case 1, while choosing $\alpha_1=\cdots=\alpha_r=\alpha$ gives Case 2.

    These polynomials, despite their similarities with Jacobi--Pi\~neiro polynomials, exhibit an important difference: both parameters of the weights \eqref{eq:jp_weights_both_vec} vary. As a consequence, the favorable properties of Jacobi--Pi\~neiro, such  as the Rodrigues formula or the differential equations they satisfy \cite{ABVA03, BCVA05,NVA16} may be lost. Nevertheless, these polynomials are still computable by solving the corresponding linear system.\\
    
    We now present an example of MOPs  on an unbounded interval, which generalizes Laguerre polynomials to the multiple setting.
    
    \subsubsection{Multiple Laguerre Polynomials of the First Kind}
    \label{subsubsec:multiple_laguerre}
    
    Multiple Laguerre polynomials of the first kind extend standard Laguerre polynomials to unbounded intervals $I = (0, \infty)$ \cite{BK05, LW08}. They appear frequently in random matrix theory (specifically in models with external sources), non-intersecting Brownian motions, and queuing models \cite{BK04, DK07}.
    
    Given a multi-index $\vec{n} = (n_1, \dots, n_r) \in \mathbb{N}_0^r$ and parameter vector $\vec{\alpha} = (\alpha_1, \dots, \alpha_r)$ with $\alpha_j > -1$, the weight functions are defined by
    \begin{equation}
    \label{eq:laguerre_weights}
    \omega_j(x) = x^{\alpha_j} e^{-x}, \quad j = 1, \dots, r.
    \end{equation}
    If $\alpha_j - \alpha_k \notin \mathbb{Z}$ for all $j \neq k$, the weights $\{\omega_1, \dots, \omega_r\}$ form an AT-system on $(0, \infty)$, ensuring that all multi-indices are normal. 
    
    The monic Type~II multiple Laguerre polynomial of the first kind is denoted by $L_{\vec{n}}^{\vec{\alpha}}(x)$. It is a polynomial of degree $|\vec{n}|$ satisfying the orthogonality relations:
    \begin{equation}
    \label{eq:laguerre_orth}
    \int_{0}^\infty  L_{\vec{n}}^{\vec{\alpha}}(x)\, x^k \, x^{\alpha_j} e^{-x} \,\mathrm dx = 0, \quad k = 0, 1, \dots, n_j - 1, \quad j = 1, \dots, r.
    \end{equation}

    The explicit Rodrigues formula for these monic orthogonal polynomials is given by \eqref{eq:rodrigues-jp} with $\omega_j(x)=x^{\alpha_j}e^{-x}$ and $\phi(x)=x$ \cite{ABVA03}. Deeper information about these polynomials and further references can be found in \cite[Section 3.2]{ABVA03}, \cite[Section 23.4.1]{Ism05} or \cite[Section 3.6.1]{VA20}.\\

    Having established the main univariate multiple orthogonal families, we now turn our attention to multivariate extensions. A powerful method for constructing orthogonal polynomials in two variables from univariate building blocks is Koornwinder's classical construction.
    
    \subsection{Koornwinder's Construction for Bivariate Orthogonal Polynomials}
    \label{subsec:koornwinder_construction}

    In this section, we present a well-known approach for defining OPs on planar domains based on univariate OPs. This technique, widely referred to as Koornwinder’s construction, is built on obtaining orthogonal polynomials in two variables through separation of variables. The initial groundwork was established by Larcher \cite{Lar59}, who introduced the core mechanism of generating bivariate orthogonal systems by combining one-dimensional orthogonal polynomials, an approach subsequently applied to specific classes of weight functions by Agahanov \cite{Aga65}. However, it was Koornwinder \cite{Koo75} who systematically developed and popularized this methodology, establishing a unified framework for constructing bivariate analogues of classical orthogonal polynomials over planar domains such as the unit disk, the simplex, and parabolic regions. A deeper treatment of this construction and its geometric scope is documented by Dunkl and Xu \cite[Section~2.6.1]{DX14}. Beyond its direct geometric applications, Koornwinder's construction also provides a practical framework for studying structural properties of bivariate orthogonal polynomials, such as their recurrence relations \cite{Kow82a, Kow82b}. The method is explained formally below. 

    Let $\omega_1$ and $\omega_2$ be univariate weight functions defined on the intervals $(a,b)$ and $(c,d)$, respectively. Let $\rho(x)$ be a positive function defined on $(a,b)$ satisfying one of the following two structural assumptions:
    \begin{itemize}
        \item \textbf{Case I:} $\rho(x)$ is a polynomial of degree $1$.
        \item \textbf{Case II:} $\rho(x) = \sqrt{\sigma(x)}$, where $\sigma(x)$ is a non-negative polynomial of degree at most $2$. In this case, we assume $c = -d < 0$ and that $\omega_2$ is an even function on $(-d, d)$.
    \end{itemize}
    
    For each $k \in \mathbb{N}_0$, let $\{p_{n}^{(k)}\}_{n\in\NN_0}$ denote the system of univariate orthonormal polynomials associated with the parameter-dependent modified weight function $\rho^{2k+1}(x) \omega_1(x)$ on $(a,b)$. In addition, let $\{q_n\}_{n\in\NN_0}$ denote the system of univariate orthonormal polynomials with respect to $\omega_2(x)$ on $(c,d)$.
    
    \begin{proposition}[\cite{DX14}, Proposition~2.6.1]
    \label{prop:koornwinder}
    For each $n \in \mathbb{N}_0$ and $0 \le k \le n$, define the bivariate polynomials $P_{n,k}(x,y)$ by
    \begin{equation}
    \label{eq:koornwinder_poly}
    P_{n,k}(x,y) \coloneqq p_{n-k}^{(k)}(x) \, [\rho(x)]^k \, q_k\left( \frac{y}{\rho(x)} \right).
    \end{equation}
    Then $\{P_{n,k} : 0 \le k \le n\}$ forms a mutually orthonormal basis for the space of bivariate polynomials of total degree $n$ with respect to the weight function
    \begin{equation}
    \label{eq:koornwinder_weight}
    W(x,y) = \omega_1(x) \, \omega_2\left( \frac{y}{\rho(x)} \right), \quad (x,y) \in R,
    \end{equation}
    where the planar domain $R \subset \mathbb{R}^2$ is given by
    \begin{equation}
    \label{eq:koornwinder_domain}
    R = \{ (x,y) \in \mathbb{R}^2 : a < x < b, \quad c \rho(x) < y < d \rho(x) \}.
    \end{equation}
    \end{proposition}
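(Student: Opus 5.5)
The approach is to verify three things directly: each $P_{n,k}$ is a polynomial of total degree $n$; the inner products $\int_R P_{n,k}P_{m,j}W$ vanish unless $(n,k)=(m,j)$ and equal $1$ on the diagonal; and there are enough of them to span. All integrals will be computed through the substitution $y=\rho(x)t$, with $t\in(c,d)$ and $\mathrm dy=\rho(x)\,\mathrm dt$. This maps $R$ onto the rectangle $(a,b)\times(c,d)$ and turns $W(x,y)$ into $\omega_1(x)\omega_2(t)$, so the integrals factor into univariate pieces.

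\textbf{Polynomiality.} Write $q_k(s)=\sum_{i\le k} c_i s^i$, so that $[\rho(x)]^k q_k(y/\rho(x))=\sum_i c_i\, y^i \rho(x)^{k-i}$.
\begin{itemize}
\item In Case~I, $\rho$ is a polynomial, so this is clearly a polynomial of total degree at most $k$, and $\deg\le k$ follows since $\deg\rho=1$.
\item In Case~II, $\omega_2$ is even on $(-d,d)$, so $q_k$ has the parity of $k$ and only $i\equiv k \pmod 2$ occur. Hence $\rho^{k-i}=\sigma^{(k-i)/2}$ is a polynomial of degree at most $k-i$.
\end{itemize}
In both cases the top-degree term contains $c_k y^k$ with $c_k\neq0$. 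Multiplying by $p_{n-k}^{(k)}(x)$ gives total degree exactly $n$, and the monomial $x^{n-k}y^k$ appears with nonzero coefficient.

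\textbf{Orthonormality.} After the change of variables,
\[
\int_R P_{n,k}P_{m,j}W\,\mathrm dx\,\mathrm dy=\int_a^b p_{n-k}^{(k)}(x)p_{m-j}^{(j)}(x)\rho(x)^{k+j+1}\omega_1(x)\,\mathrm dx\int_c^d q_k(t)q_j(t)\omega_2(t)\,\mathrm dt .
\]
The $t$-integral equals $\delta_{kj}$. When $k=j$ the $x$-weight is exactly $\rho^{2k+1}\omega_1$, so the $x$-integral equals $\delta_{nm}$. Finite moments guarantee that all these integrals converge.

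\textbf{Basis.} The family $\{P_{m,k}:0\le k\le m\le n\}$ is orthonormal, hence linearly independent, and it has $\binom{n+2}{2}$ elements. This equals the dimension of the space of bivariate polynomials of total degree at most $n$. The $P_{n,k}$ with fixed $n$ are then orthogonal to all polynomials of lower degree and span, together with these, the degree-$n$ space, so they form an orthonormal basis of the degree-$n$ orthogonal component. The leading-monomial remark above gives an independent check that they are genuinely of degree $n$.

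The main subtlety is the Case~II polynomiality step. It depends essentially on the parity of $q_k$, which comes from the evenness of $\omega_2$ on a symmetric interval; without it, odd powers of $\sqrt{\sigma}$ would appear. I would check this step first, by showing the parity claim by induction on the three-term recurrence, or via uniqueness of orthonormal polynomials under $t\mapsto -t$.
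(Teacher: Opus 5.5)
Your proposal is correct and follows the standard argument. The paper does not reprove this result (it cites \cite[Proposition~2.6.1]{DX14}), but your approach matches the mechanism the paper itself uses in the proofs of Propositions~\ref{prop:orthogonality-1} and~\ref{prop:biorthogonality-koornwinder}: you substitute $y=\rho(x)t$ to factor every integral into an $x$-integral against $\rho^{k+j+1}\omega_1$ times a $t$-integral against $\omega_2$, and you use the parity of $q_k$ to get polynomiality in Case~II. Your $\binom{n+2}{2}$ counting argument also correctly reads ``basis for the space of polynomials of total degree $n$'' as an orthonormal basis of the orthogonal complement of the polynomials of degree at most $n-1$.
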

    
    This construction yields explicit orthogonal bases over classical two-dimensional regions, including triangles (simplices), disks, cones, and parabolic regions \cite{ABFX23, DX14, FPP12, Koo75, Xu20}.\\

    Building upon these foundational concepts, the next section introduces a novel extension of Koornwinder's construction to the framework of multiple orthogonality. By synthesizing univariate multiple orthogonal polynomial systems with geometric scaling mechanisms, we establish explicit bivariate multiple orthogonal bases over non-trivial planar domains.
    
    \section{The Koornwinder construction for Multiple Orthogonal Polynomials}\label{sec:koornwinder}

    \subsection{Case 1: Combining a family of MOPs and a standard family of OP}\label{ssec:koornwinder-case-1}

    Consider a perfect system of weights $\omega_1,\dots,\omega_r$, all of them supported on the interval $[a,b]$, and a weight $\sigma$ supported on $[c,d]$. Let $\rho(x)$ be a positive function defined on $[a,b]$ such that either
    \begin{enumerate}
        \item $\rho(x)$ is a polynomial of degree 1.
        \item $\rho(x)=\sqrt{u(x)}$ with $u(x)$ being a polynomial of degree at most 2. In this case we assume that $\sigma$ is even and $[c,d]=[-d,d]$ is a symmetric interval.
    \end{enumerate}
    
    Let $\vec n =(n_1,\dots,n_r)$, $\vec k =(k_1,\dots,k_r)$ be two multi-indices such that $0\leq k_j\leq n_j$ and assume that the system of weights $\{\rho(x)^{2|\vec k|+1}\omega_j(x): 1\leq j\leq r\}$ is perfect and denote by $\{p_{\vec n}^{(\vec k)}\}_{\vec n\in\NN_0^r}$ the corresponding Type~II MOP. On the other hand, assume that the orthogonal polynomial sequence (OPS) with respect to the weight $\sigma(x)$ does exist and denote it by $\{q_n\}_{n\in\NN_0}$. We define the following functions
    \begin{equation}
        \label{eq:koornwinder-multiple-1}
        \mathbf P_{\vec n, \vec k}(x,y)=p_{\vec n-\vec k}^{(\vec k)}(x)\, [\rho(x)]^{|\vec k|} \, q_{|\vec k|}\left(\frac{y}{\rho(x)}\right).
    \end{equation}

    Observe that if  $\rho$ is either a polynomial of degree $1$ or the square root of a degree at most $2$ polynomial, then $\mathbf P_{\vec n,\vec k}$ is a polynomial of degree $|\vec n|$. Now, consider the weights
    \begin{equation}
        \label{eq:weights-1}
        W_j(x,y)=\omega_j(x)\,\sigma\left(\frac y {\rho(x)}\right),\qquad j=1,\dots,r,
    \end{equation}
    all of them supported on
    \begin{equation}
        \label{eq:support-1}
        R= \{(x,y)\in\RR^2: a<x<b, c\,\rho(x)<y<d\,\rho(x)\}.
    \end{equation}
    
    Then, in the next result we show the multiple orthogonality relations satisfied by these polynomials.

    \begin{proposition}\label{prop:orthogonality-1}
        Let $\vec n =(n_1,\dots,n_r)$, $\vec k =(k_1,\dots,k_r)$ be such that $0\leq k_j\leq n_j$ and consider the polynomial $\mathbf P_{\vec n, \vec k}$ introduced in \eqref{eq:koornwinder-multiple-1}. Then,
        \begin{equation}
            \label{eq:orthogonality-1}
            \int_R \mathbf P_{\vec n, \vec k}(x,y)\, x^t y^s\, W_j(x,y)\, \mathrm dx\,\mathrm dy =0\qquad\text{ if }t<n_j-k_j\text{ or } s<k_j,\quad j=1,\dots,r. 
        \end{equation}
        In particular, $\mathbf P_{\vec n, \vec k}$ is orthogonal to polynomials of degree lower than $n_j$ with respect to $W_j$.
    \end{proposition}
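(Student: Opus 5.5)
The plan is to reduce the double integral to a product of two univariate integrals using the Koornwinder change of variables, and then read off the two vanishing mechanisms: one from the $y$-factor (the clause $s<k_j$) and one from the $x$-factor (the clause $t<n_j-k_j$). Fix $j$ and monomials $x^t y^s$. On $R$ we substitute $y=\rho(x)v$, so that $\mathrm dy=\rho(x)\,\mathrm dv$ with $v\in(c,d)$, and $W_j(x,y)=\omega_j(x)\sigma(v)$. Using \eqref{eq:koornwinder-multiple-1} we get
\begin{equation}
\int_R \mathbf P_{\vec n,\vec k}\,x^ty^s\,W_j\,\mathrm dx\,\mathrm dy
=\left(\int_a^b p^{(\vec k)}_{\vec n-\vec k}(x)\,x^t\,\rho(x)^{|\vec k|+s+1}\,\omega_j(x)\,\mathrm dx\right)
\left(\int_c^d q_{|\vec k|}(v)\,v^s\,\sigma(v)\,\mathrm dv\right).
\end{equation}
It then suffices to show that one of the two factors vanishes in each case of the statement.

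\emph{The case $s<k_j$.} Since $k_j\le|\vec k|$, we have $s<|\vec k|$. The second factor is then zero by the orthogonality of $q_{|\vec k|}$ with respect to $\sigma$. This settles the clause $s<k_j$ for every $t$.

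\emph{The case $t<n_j-k_j$.} By the previous step we may assume $s\ge|\vec k|$. We write
\begin{equation}
\rho^{|\vec k|+s+1}=\rho^{2|\vec k|+1}\,\rho^{\,s-|\vec k|}.
\end{equation}
In the setting where $\rho$ is a polynomial of degree $1$, the function $x^t\rho^{s-|\vec k|}$ is a polynomial of degree $t+s-|\vec k|$. In the setting $\rho=\sqrt u$, with $\sigma$ even on $[-d,d]$, there are two sub-cases.
\begin{itemize}
\item If $s-|\vec k|$ is odd, then $q_{|\vec k|}(v)v^s$ is odd, so the second factor vanishes.
\item If $s-|\vec k|$ is even, then $\rho^{s-|\vec k|}=u^{(s-|\vec k|)/2}$ is a polynomial of degree at most $s-|\vec k|$.
\end{itemize}
In the remaining cases the first factor is therefore $\int p^{(\vec k)}_{\vec n-\vec k}\,\pi\,\rho^{2|\vec k|+1}\omega_j\,\mathrm dx$ with $\deg\pi\le t+(s-|\vec k|)$. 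By the Type~II orthogonality of $p^{(\vec k)}_{\vec n-\vec k}$ with respect to the $j$-th weight of the perfect system $\{\rho^{2|\vec k|+1}\omega_i\}$, this integral vanishes whenever $\deg\pi<n_j-k_j$. For $s=|\vec k|$ this is exactly the condition $t<n_j-k_j$. For $s>|\vec k|$ it is the condition $t<n_j-k_j$ with $t$ counted as the effective $x$-degree $t+s-|\vec k|$ left after the factorisation.

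\emph{The main obstacle.} The delicate point is exactly this bookkeeping of degrees for $s>|\vec k|$. The clause $t<n_j-k_j$ must be read for the effective $x$-degree $t+s-|\vec k|$, since the surplus powers of $\rho$ feed back into the $x$-integral. Read literally, with $t<n_j-k_j$ and $s$ arbitrarily large, the argument above does not force either factor to vanish; this is the step I would check first. I would also verify the parity argument in the square-root case.

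\emph{The ``in particular'' claim.} Take $t+s<n_j$. Either $s<k_j$, which is covered above, or $s\ge k_j$. In the latter case we need $t+s-|\vec k|<n_j-k_j$. This holds because
\begin{equation}
t+s-|\vec k|\le t+s-k_j<n_j-k_j .
\end{equation}
Hence $\mathbf P_{\vec n,\vec k}$ is orthogonal with respect to $W_j$ to all bivariate polynomials of total degree less than $n_j$.
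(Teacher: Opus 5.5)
Your argument follows the same route as the paper's proof: the substitution $y=\rho(x)z$, the factorisation into $I_x\cdot I_z$, the vanishing of $I_z$ for $s<|\vec k|$ (hence for $s<k_j$), the parity argument when $\rho=\sqrt u$, and Type~II orthogonality of $p^{(\vec k)}_{\vec n-\vec k}$ against $x^t\rho(x)^{\delta}$ with $\delta=s-|\vec k|$. What you actually derive is correct. The integral vanishes if $s<|\vec k|$, or if $t+s<n_j-k_j+|\vec k|$, or (for $\rho=\sqrt u$) if $s-|\vec k|$ is odd. Since $|\vec k|\ge k_j$, this gives the ``in particular'' claim for $t+s<n_j$. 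In that last step the natural case split is $s<|\vec k|$ versus $s\ge|\vec k|$, and your earlier step already covers the first case.

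The step you flagged is not a gap in your reasoning but a defect of the statement. For non-constant $\rho$, the clause ``$t<n_j-k_j$ with $s$ arbitrary'' is false, so you should say so outright rather than reinterpret $t$ as an ``effective $x$-degree''. The paper's proof reaches that clause only through a reversed inequality. From ``$I_x=0$ if $t+s<|\vec k|+n_j-k_j$'' it infers, ``since $t+s\ge t+|\vec k|$'', that $I_x=0$ if $t\le n_j-k_j$. But $t+s\ge t+|\vec k|$ only yields the converse implication, and the $\le$ would fail already at $s=|\vec k|$.

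Here is a minimal counterexample. Take $r=1$, $[a,b]=[c,d]=[0,1]$, $\omega_1=\sigma\equiv1$, $\rho(x)=1-x$ and $q_0\equiv1$, so that $R=\{0<x<1,\ 0<y<1-x\}$ and $W_1\equiv1$. For $\vec n=(1)$, $\vec k=(0)$, the monic degree-one orthogonal polynomial for the weight $1-x$ on $[0,1]$ is $x-\tfrac13$. Hence $\mathbf P_{(1),(0)}(x,y)=x-\tfrac13$, and
\[
\int_R\Bigl(x-\tfrac13\Bigr)y\,\mathrm dx\,\mathrm dy=\int_0^1\Bigl(x-\tfrac13\Bigr)\frac{(1-x)^2}{2}\,\mathrm dx=\tfrac1{24}-\tfrac1{18}=-\tfrac1{72}\neq0,
\]
although $t=0<1=n_1-k_1$.

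The square-root case fails in the same way. With $\omega_1\equiv1$ on $[0,1]$, $\sigma\equiv1$ on $[-1,1]$ and $\rho=\sqrt x$, one gets $\mathbf P_{(1),(0)}=x-\tfrac35$ and $\int_R(x-\tfrac35)\,y^2\,\mathrm dx\,\mathrm dy=\tfrac{16}{525}\neq0$.

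The literal clause does hold when $\rho$ is constant (as for the square), since then $\rho^{\delta}\equiv1$. Otherwise the correct formulation is exactly the one you proved, and the total-degree orthogonality survives.
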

    \begin{proof}
    We have that
    $$
    \int_R \mathbf P_{\vec n, \vec k}(x,y)\, x^t y^s\, W_j(x,y)\, \mathrm dx\,\mathrm dy =\int_a^b\int_{c\,\rho(x)}^{d\,\rho(x)}p_{\vec n-\vec k}^{(\vec k)}(x)\, [\rho(x)]^{|\vec k|} \, q_{|\vec k|}\left(\frac{y}{\rho(x)}\right)\, x^t y^s\, \omega_j(x)\,\sigma\left(\frac y {\rho(x)}\right)\, \mathrm dx\mathrm d y.
    $$
    
    Using the change of variables $y = \rho(x) z$, $\mathrm d y=\rho(x)\mathrm d z$, we rewrite the integral as
    $$
    \int_a^b\int_{c}^d p_{\vec n-\vec k}^{(\vec k)}(x)\, [\rho(x)]^{|\vec k|}q_{|\vec k|}(z) x^t z^s [\rho(x)]^s \omega_j(x)\,\sigma(z)\rho(x)\, \mathrm dx\mathrm d z,
    $$
    so that we can split it as the product of two univariate integrals $I_x\times I_z$, where
    \begin{equation}\label{eq:integrals-koornwinder-case-1}
        \begin{aligned}
            I_x &= \int_a^b p_{\vec n-\vec k}^{(\vec k)}(x)\,x^t\, [\rho(x)]^{|\vec k|+s+1}\omega_j(x)\,\mathrm d x,\\
            I_z &= \int_{c}^d q_{|\vec k|}(z)\, z^s\, \sigma(z)\,\mathrm d z.
        \end{aligned}
    \end{equation}
    Observe that $I_z=0$ whenever $s<|\vec k|$ due to the orthogonality of the polynomials $\{q_n\}_{n\geq 0}$. If $s\geq |\vec k|$, or equivalently $s=|\vec k|+\delta$ with $\delta$ being a non-negative integer, then $I_x$ reads
    $$
    I_x = \int_a^b p_{\vec n-\vec k}^{(\vec k)}(x)\,x^t\,\rho(x)^\delta [\rho(x)^{2|\vec k|+1}\omega_j(x)]\,\mathrm d x.
    $$
    Let us discuss the value of $I_x$ regarding $\rho(x)$.
    \begin{itemize}
        \item If $\rho(x)$ is a polynomial of degree $1$, then, by the Type~II orthogonality of the polynomials $\{p_{\vec n}^{(\vec k)}\}_{\vec n\in\NN_0^r}$ with respect to the weights $\rho(x)^{2|\vec k|+1}\omega_j(x)$, we have that $I_x=0$ whenever $t+\delta<n_j-k_j$. This means $I_x=0$ if $t+s<|\vec k|+n_j-k_j$, and since $t+s\geq t+|\vec k|$, we have that $I_x = 0$ if $t\leq n_j-k_j$.

        \item If $\rho(x)=\sqrt{u(x)}$ with $u(x)$ a polynomial of degree at most $2$, recall that, in this case, $\sigma$ is an even function supported on a symmetric interval. Consequently, $I_z$ in \eqref{eq:integrals-koornwinder-case-1} vanishes also whenever $\delta$ is odd. If $\delta$ is even, then focus on $I_x$:
        $$
        I_x = \int_a^b p_{\vec n-\vec k}^{(\vec k)}(x)\,x^t\,u(x)^{\delta/2} [\rho(x)^{2|\vec k|+1}\omega_j(x)]\,\mathrm d x.
        $$
        As we are assuming that $\delta$ is even, then $u(x)^{\delta/2}$ is a polynomial of degree at most $\delta$, and using Type~II orthogonality, we deduce again that $I_x=0$ if $t\leq n_j-k_j$. 
    \end{itemize}    
    \end{proof}

    Besides, due to the combination of a multiple and a standard family of OPs, in this first case we are able to define Type~I MOPs  via the Koornwinder construction. The idea is as simple as substituting the Type~II polynomial $p_{\vec n-\vec k}^{(\vec k)}(x)$ in \eqref{eq:koornwinder-multiple-1} by the Type~I polynomials.

    \subsubsection{Type I Multiple Orthogonal Polynomials via the Koornwinder's construction}\label{sssec:type1Koornwinder}

    Recall that the system $\{\rho(x)^{2|\vec k|+1}\omega_j(x): 1\leq j\leq r\}$ is assumed to be perfect, so let us denote by $\{a_{\vec n,j}^{(\vec k)},\, j=1,\dots,r\}_{\vec n\in\NN^r_0}$ the associated Type~I MOP. Then, we define
    \begin{equation}
        \label{eq:koornwinder-multiple-1-type-i}
        \mathbf A_{\vec n, \vec k,(j)}(x,y)=a_{\vec n-\vec k,j}^{(\vec k)}(x)\, [\rho(x)]^{|\vec k|} \, q_{|\vec k|}\left(\frac{y}{\rho(x)}\right),\qquad j=1,\dots,r.
    \end{equation}
    Observe that, in either case for $\rho(x)$, and since $\deg(a_{\vec n-\vec k,j}^{(\vec k)})\leq n_j-k_j-1$, $\mathbf A_{\vec n, \vec k,(j)}$ is a polynomial of degree at most $n_j-|\vec k|-k_j-1$. Next, we show that these polynomials satisfy orthogonality relations similar to the univariate Type~I MOP.

    \begin{proposition}\label{prop:orthogonality-1-type-i}
        Let $\vec n =(n_1,\dots,n_r)$, $\vec k =(k_1,\dots,k_r)$ be such that $0\leq k_j\leq n_j$ and consider the polynomials $\mathbf A_{\vec n, \vec k,(1)},\dots,\mathbf A_{\vec n, \vec k,(r)}$ introduced in \eqref{eq:koornwinder-multiple-1-type-i}. Then,
        \begin{equation}\label{eq:orthogonality-koornwinder-1-type-i}
            \sum_{j=1}^r\int_R \mathbf A_{\vec n, \vec k,(j)}(x,y) x^t y^s W_j(x,y)\, \mathrm dx\,\mathrm dy = 0 \qquad\text{ if } t<|\vec n|-|\vec k|-1\text{ or }s<|\vec k|.
        \end{equation}
        In particular, \eqref{eq:orthogonality-koornwinder-1-type-i} also holds if $t+s<|\vec n|-1$.
    \end{proposition}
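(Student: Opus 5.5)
The plan mirrors the proof of Proposition~\ref{prop:orthogonality-1}: reduce the bivariate integral to a product of univariate integrals and then use the univariate Type~I orthogonality of $\{a^{(\vec k)}_{\vec n-\vec k,j}\}_{j}$ with respect to the perfect system $\{\rho(x)^{2|\vec k|+1}\omega_j(x)\}_{j=1}^r$. I expect the argument to establish \eqref{eq:orthogonality-koornwinder-1-type-i} for $s<|\vec k|$ and for $t<|\vec n|-|\vec k|-1$ with $s=|\vec k|$ (and, in Case~2, also $s-|\vec k|$ odd). For $t<|\vec n|-|\vec k|-1$ with arbitrary $s$ the vanishing does not seem to follow in general (see the last paragraph).

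\textbf{Step 1 (factorization).} For each $j$ substitute $y=\rho(x)z$, $\mathrm dy=\rho(x)\,\mathrm dz$ in the $j$-th integral, exactly as before. The factor $q_{|\vec k|}(z)z^s\sigma(z)$ does not depend on $j$, so the whole sum factors as $I_z\cdot I_x$ with
\[
I_z=\int_c^d q_{|\vec k|}(z)\,z^s\,\sigma(z)\,\mathrm dz,\qquad
I_x=\sum_{j=1}^r\int_a^b a^{(\vec k)}_{\vec n-\vec k,j}(x)\,x^t\,[\rho(x)]^{|\vec k|+s+1}\,\omega_j(x)\,\mathrm dx .
\]
The key observation is that $I_z$ is the same for all $j$; this is what lets the Type~I \emph{sum} survive the separation of variables.

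\textbf{Step 2 (the $z$-factor).} If $s<|\vec k|$, then $I_z=0$ by orthogonality of $\{q_n\}$, and the claim holds for every $t$. In Case~2 ($\sigma$ even on $[-d,d]$), $I_z=0$ also whenever $\delta:=s-|\vec k|\ge 0$ is odd.

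\textbf{Step 3 (the $x$-factor).} For $s=|\vec k|+\delta$ with $\delta\ge0$ (and $\delta$ even in Case~2), rewrite
\[
I_x=\sum_j\int_a^b a^{(\vec k)}_{\vec n-\vec k,j}(x)\,\pi_\delta(x)\,\bigl[\rho(x)^{2|\vec k|+1}\omega_j(x)\bigr]\,\mathrm dx .
\]
Here $\pi_\delta(x)=x^t\rho(x)^\delta$ in Case~1 and $\pi_\delta(x)=x^t u(x)^{\delta/2}$ in Case~2; in both cases $\pi_\delta$ is a polynomial of degree at most $t+\delta$. The Type~I relations \eqref{eq:typeI_orthogonality} for the multi-index $\vec n-\vec k$ kill every polynomial of degree at most $|\vec n|-|\vec k|-2$. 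Hence $I_x=0$ as soon as $t+\delta\le|\vec n|-|\vec k|-2$, i.e. $t+s<|\vec n|-1$. For $s=|\vec k|$ this is precisely $t<|\vec n|-|\vec k|-1$. Combining with Step~2 gives the stated claim on the range $\{s<|\vec k|\}\cup\{s=|\vec k|,\ t<|\vec n|-|\vec k|-1\}$, plus the odd-$\delta$ range in Case~2. It also gives the ``in particular'' statement: if $t+s<|\vec n|-1$, then either $s<|\vec k|$ or $t+\delta<|\vec n|-|\vec k|-1$.

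\textbf{Main obstacle.} The hard point is the case $t<|\vec n|-|\vec k|-1$ with $s>|\vec k|$ (and, in Case~2, $\delta$ even). Here $\deg\pi_\delta=t+\delta$ can exceed $|\vec n|-|\vec k|-2$, so Type~I orthogonality no longer applies. Meanwhile $I_z$ is generically nonzero: already for $\delta=0$ it equals the leading-coefficient pairing of $q_{|\vec k|}$. So vanishing for arbitrary $s$ would need an additional mechanism. I would first try to exhibit one by hand, e.g. checking whether $\rho^\delta$ can be absorbed into the weights. Failing that, I would test a small case to decide whether the condition must be read as $t+(s-|\vec k|)<|\vec n|-|\vec k|-1$; my expectation is that the argument only yields that reading. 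This is the same issue as the step ``$t+s\ge t+|\vec k|$'' in the proof of Proposition~\ref{prop:orthogonality-1}.
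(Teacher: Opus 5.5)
Your argument is the paper's argument. You substitute $y=\rho(x)z$, note that $I_z$ does not depend on $j$ so the Type~I sum factors as $I_z\cdot I_x$, and apply Type~I orthogonality of $a^{(\vec k)}_{\vec n-\vec k,j}$ with respect to $\rho^{2|\vec k|+1}\omega_j$ to $x^t\rho^\delta$ (resp.\ $x^tu^{\delta/2}$). Everything you actually derive is correct. The sum vanishes when $s<|\vec k|$ or when $t+s<|\vec n|-1$, and for $\rho=\sqrt u$ also when $s-|\vec k|$ is odd.

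Your ``main obstacle'' is not a gap in your reasoning but an error in the statement. The paper starts from ``$I_x=0$ if $t+\delta<|\vec n|-|\vec k|-1$'' and then writes ``As $\delta\geq 0$, we obtain that $I_x=0$ if $t<|\vec n|-|\vec k|-1$''. That sentence reverses the implication, and the $\rho=\sqrt u$ bullet repeats it. No extra mechanism can rescue the clause, because it is false.

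Here is a counterexample with a degree-one $\rho$:
\begin{itemize}
\item Take $\rho(x)=1-x$, $\omega_1\equiv1$ and $\sigma\equiv1$ on $[0,1]$, so $R=T$ and $W_1\equiv1$.
\item Take $\vec n=(3,0,\dots,0)$ and $\vec k=(1,0,\dots,0)$. For $j\geq2$ we have $\deg a^{(\vec k)}_{\vec n-\vec k,j}\le -1$, so those components vanish and this is effectively $r=1$.
\item The Type~I polynomial of index $2$ for the weight $(1-x)^3$ is $a(x)=150\,(x-\tfrac15)$, and $q_1(z)=z-\tfrac12$.
\item Hence $\mathbf A_{\vec n,\vec k,(1)}(x,y)=150\,(x-\tfrac15)\bigl(y-\tfrac{1-x}{2}\bigr)$.
\end{itemize}
For $t=0<|\vec n|-|\vec k|-1=1$ and $s=2$,
\[
\int_T \mathbf A_{\vec n,\vec k,(1)}(x,y)\,y^2\,\mathrm dx\,\mathrm dy
=150\int_0^1\Bigl(x-\tfrac15\Bigr)(1-x)^4\,\mathrm dx\int_0^1\Bigl(z-\tfrac12\Bigr)z^2\,\mathrm dz
=150\cdot\Bigl(-\tfrac{1}{150}\Bigr)\cdot\tfrac{1}{12}=-\tfrac{1}{12}\neq 0 .
\]

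So the proposition holds exactly in the form you proved. The first condition should read $t+(s-|\vec k|)<|\vec n|-|\vec k|-1$. Equivalently, vanishing holds whenever $s<|\vec k|$ or $t+s<|\vec n|-1$, and the ``in particular'' clause is fine.

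You are also right about Proposition~\ref{prop:orthogonality-1}. Use the same data with $\vec n=(2,0,\dots,0)$ and $p^{(\vec k)}_{\vec n-\vec k}(x)=x-\tfrac15$. Then $t=0<n_1-k_1$ and $s=2$ give the nonzero value $-\tfrac{1}{1800}$, so its first clause fails in the same way. The correct range there is $s<|\vec k|$ or $t+s<n_j+|\vec k|-k_j$.
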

    \begin{proof} This proof is analogous to the proof of Proposition~\ref{prop:orthogonality-1}. After the change of variables $y=\rho(x)\, z$, the integral $I$ in \eqref{eq:orthogonality-koornwinder-1-type-i} can be expressed as the product of two integrals as $I=I_x\times I_z$ where 
    \begin{equation*}
        \begin{aligned}
            I_x &= \sum_{j=1}^r \int_a^b  a_{\vec n-\vec k,j}^{(\vec k)}(x)\,x^t\,\rho(x)^{|\vec k|+s+1}\, \omega_j(x)\,\mathrm d x\\
            I_z &= \int_{c}^d q_{|\vec k|}(z)\, z^s\, \sigma(z)\,\mathrm d z.
        \end{aligned}
    \end{equation*}
    Observe that $I_z$ is the same one as in the proof of Proposition~\ref{prop:orthogonality-1}, and we know that it vanishes whenever $s<|\vec k|$. If $s=|\vec k|+\delta$ with $\delta\in\NN_0$, then
    $$
    I_x = \sum_{j=1}^r \int_a^b  a_{\vec n-\vec k,j}^{(\vec k)}(x)\,x^t\,\rho(x)^{\delta}\,\rho(x)^{2|\vec k|+1} \omega_j(x)\,\mathrm d x,
    $$
    thus
    \begin{itemize}
        \item If $\rho(x)$ is a polynomial of degree $1$, then $x^t\,\rho(x)^{\delta}$ is a polynomial of degree at most $t+\delta$ and, by Type~I orthogonality, $I_x=0$ if $t+\delta< |\vec n-\vec k|-1=|\vec n|-|\vec k|-1$. As $\delta\geq 0$, we obtain that $I_x=0$ if $t<|\vec n|-|\vec k|-1$. Furthermore, using $\delta =s-|\vec k|$, $I_x=0$ if $t+s<|\vec n|-1$.
        
        \item If $\rho(x)=\sqrt{u(x)}$ with $u(x)$ a polynomial of degree $\leq 2$, then we know that, in this case $I_z=0$ if $\delta$ is odd. If $\delta$ is even, then $x^t\,\rho(x)^{\delta}$ is again a polynomial of degree at most $t+\delta$ and owing to the argument above, $I_x =0$ if $t<|\vec n|-|\vec k|-1$ too.
    \end{itemize}
        
    \end{proof}

    Using the integrals introduced in the proof above, the following result is straightforward:
    \begin{corollary}
        Within the assumptions of Proposition~\ref{prop:orthogonality-1-type-i}, whenever $t+s=|\vec n|-1$, we have that 
        \begin{equation}\label{eq:orthogonality-koornwinder-1-type-i-2}
            \sum_{j=1}^r\int_R \mathbf A_{\vec n, \vec k,(j)}(x,y) x^t y^s W_j(x,y)\, \mathrm dx\,\mathrm dy = \int_c^d q_{|\vec k|}(z)\, z^s \,\sigma(z)\, \mathrm dz.
        \end{equation}
        Indeed, the right-hand side integral vanishes if $s<|\vec k|$ and it is non-zero if $s=|\vec k|$.
    \end{corollary}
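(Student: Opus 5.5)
The plan is to reuse the factorization from the proof of Proposition~\ref{prop:orthogonality-1-type-i}. After the change of variables $y=\rho(x)z$, the left-hand side of \eqref{eq:orthogonality-koornwinder-1-type-i-2} splits as $I_x\times I_z$, where
\begin{equation*}
I_x=\sum_{j=1}^r\int_a^b a^{(\vec k)}_{\vec n-\vec k,j}(x)\,x^t\,\rho(x)^{\delta}\,\rho(x)^{2|\vec k|+1}\omega_j(x)\,\mathrm dx,\qquad I_z=\int_c^d q_{|\vec k|}(z)\,z^s\,\sigma(z)\,\mathrm dz,
\end{equation*}
and $s=|\vec k|+\delta$. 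The claim is then that $I_x=1$ whenever $t+s=|\vec n|-1$ and $I_z\neq 0$. So the right-hand side is exactly $I_z$ whenever that integral does not vanish, and both sides are $0$ when it does.

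First I dispose of the case $s<|\vec k|$. Here $I_z=0$ by the orthogonality of $\{q_n\}$, so the left-hand side is $0$, which matches the right-hand side. Similarly, in Case 2 ($\rho=\sqrt u$) with $\delta$ odd, $I_z=0$ because $\sigma$ is even on a symmetric interval, so both sides vanish.

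The remaining case is $s=|\vec k|+\delta$ with $\delta\ge 0$, and in Case 2 additionally $\delta$ even. Then $x^t\rho(x)^\delta$ is a polynomial of degree exactly $t+\delta=t+s-|\vec k|=|\vec n|-|\vec k|-1=|\vec n-\vec k|-1$. Its leading coefficient is $c^\delta$, where $c$ is the leading coefficient of $\rho$ (respectively $u^{\delta/2}$ has leading coefficient $\mathrm{lc}(u)^{\delta/2}$). I write
\begin{equation*}
x^t\rho(x)^\delta=c^\delta x^{|\vec n-\vec k|-1}+(\text{lower degree}).
\end{equation*}
The lower-degree terms contribute $0$ to $I_x$ by the Type~I orthogonality \eqref{eq:typeI_orthogonality} for the perfect system $\{\rho^{2|\vec k|+1}\omega_j\}$. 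The leading term contributes $c^\delta$ by the normalization \eqref{eq:typeI_normalization}. Hence $I_x=c^\delta$.

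The main obstacle is this constant $c^\delta$. The statement asserts $I_x=1$, which holds exactly when $\delta=0$ (i.e.\ $s=|\vec k|$), or when $\rho$ (resp.\ $u$) is monic. I would therefore state the identity as $\mathrm{lc}(\rho)^{\delta}I_z$, and note that this reduces to \eqref{eq:orthogonality-koornwinder-1-type-i-2} in the relevant situations: either the normalization is chosen accordingly, or $s=|\vec k|$. This does not affect the qualitative claims that the right-hand side vanishes for $s<|\vec k|$ and is nonzero for $s=|\vec k|$.

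Finally, the nonvanishing at $s=|\vec k|$ follows because $\int q_{|\vec k|}(z)z^{|\vec k|}\sigma(z)\,\mathrm dz$ equals the squared norm of $q_{|\vec k|}$ divided by its leading coefficient, which is nonzero by the existence of the OPS.
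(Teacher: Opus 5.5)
Your argument is the paper's argument: the paper's proof is just ``use the Type~I normalization and $I=I_x\times I_z$''. You are right that this tacitly takes $I_x=1$. In Case~1 the computation only gives $I_x=c^{\delta}$, with $c$ the leading coefficient of $\rho$ and $\delta=s-|\vec k|$, so once $s>|\vec k|$ the identity as printed needs that factor.

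\textbf{The step that fails is in Case~2.} There you assert that $x^t u(x)^{\delta/2}$ has degree exactly $t+\delta$ with leading coefficient $\mathrm{lc}(u)^{\delta/2}$. That requires $\deg u=2$, but the hypotheses only give $\deg u\le 2$, and the paper's own parabolic example has $u(x)=x$.

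If $\deg u\le 1$ and $\delta\ge 2$ is even, then $\deg\bigl(x^tu^{\delta/2}\bigr)\le t+\delta/2\le |\vec n-\vec k|-2$. Type~I orthogonality then gives $I_x=0$. Meanwhile $I_z$ generally does not vanish: for $\vec k=\vec 0$, $s=2$, $t=|\vec n|-3$ one gets $I_z=q_0\int_c^d z^2\sigma(z)\,\mathrm dz\neq 0$. So the left-hand side is $0$ there, and neither the stated identity nor your corrected form $\mathrm{lc}(\rho)^{\delta}I_z$ holds.

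The accurate statement is that the left-hand side equals $\kappa\,I_z$, where $\kappa$ is the coefficient of $x^{|\vec n-\vec k|-1}$ in $x^t\rho(x)^{\delta}$. Concretely:
\begin{itemize}
\item in Case~1, $\kappa=c^{\delta}$;
\item in Case~2 with $\delta$ even, $\kappa=\mathrm{lc}(u)^{\delta/2}$ if $\deg u=2$, and $\kappa=0$ if $\deg u<2$ and $\delta>0$;
\item in Case~2 with $\delta$ odd, both sides vanish, as you note.
\end{itemize}
What holds unconditionally is the last sentence of the corollary. The sum vanishes for $s<|\vec k|$. At $s=|\vec k|$ one has $\delta=0$ and $\kappa=1$, and the value is $I_z=\|q_{|\vec k|}\|^2/\mathrm{lc}(q_{|\vec k|})\neq 0$, exactly as you argue.
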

    \begin{proof}
        Just employ the normalization condition of Type~I MOPs  \eqref{eq:typeI_normalization} and the fact that $I = I_x\times I_z$.
    \end{proof}    

    Define the \textbf{Type I function} as 
    \begin{equation}
        \label{eq:type-i-function}
        \mathbf Q_{\vec n,\vec k}(x,y)=\sum_{j=1}^r \mathbf A_{\vec n,\vec k, (j)}(x,y) \, W_j(x,y),
    \end{equation}
    so that \eqref{eq:orthogonality-koornwinder-1-type-i} reads
    $$
    \int_R \mathbf Q_{\vec n,\vec k}(x,y)\, x^t y^s \,\mathrm dx\,\mathrm dy = 0 \qquad \text{ if }t<|\vec n|-|\vec k|-1\text{ or } s<|\vec k|.
    $$
    With this function, which summarizes the information of the Type~I MOPs  \eqref{eq:koornwinder-multiple-1-type-i}, we present the extension of a popular result in univariate multiple orthogonality: the biorthogonality relation between Type~I and Type~II MOP \cite[Theorem~23.1.6]{Ism05}, which has been introduced in Section~\ref{subsec:mops_general}.

    \begin{proposition}\label{prop:biorthogonality-koornwinder}
        Let $\vec n,\vec k,\vec m,\vec l\in\NN^r_0$ such that $0\leq k_j\leq n_j$ and $0\leq l_j\leq m_j$ and consider the Type II MOP $\mathbf P_{\vec n,\vec k}$ \eqref{eq:koornwinder-multiple-1} and the Type~I function $\mathbf Q_{\vec m,\vec l}(x,y)$ \eqref{eq:type-i-function}. Then:
        $$
        \int_R \mathbf P_{\vec n,\vec k}(x,y)\, \mathbf Q_{\vec m,\vec l}(x,y)\, \mathrm dx\,\mathrm dy = 0 \qquad\text{ if }|\vec k|\neq |\vec l|.
        $$
        Moreover, if $|\vec k|=|\vec l|$:
        \begin{equation}\label{eq:biorthogonality-koornwinder}
            \int_R \mathbf P_{\vec n,\vec k}(x,y)\, \mathbf Q_{\vec m,\vec l}(x,y)\, \mathrm dx\,\mathrm dy = \begin{cases}
                
                0 & \text{ if }\vec m-\vec l \leq \vec  n-\vec k \quad \text{(componentwise)},\\
                0& \text{ if }|\vec n|\leq |\vec m|-2,\\
                \|q_{|\vec k|}\|^2&  \text{ if }|\vec n|= |\vec m|-1,\\
            \end{cases}
        \end{equation}
        where $\|q_{|\vec k|}\|^2=\displaystyle\int_c^d (q_{|\vec k|}(x))^2\sigma(x)\,\mathrm dx$ is the squared norm of the polynomial $q_{|\vec k|}$.
    
    \end{proposition}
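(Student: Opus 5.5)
The plan is to repeat the separation-of-variables argument from the proofs of Propositions~\ref{prop:orthogonality-1} and~\ref{prop:orthogonality-1-type-i}. First expand
$$
\mathbf P_{\vec n,\vec k}\,\mathbf Q_{\vec m,\vec l}=\sum_{j=1}^r p^{(\vec k)}_{\vec n-\vec k}(x)\,a^{(\vec l)}_{\vec m-\vec l,j}(x)\,\rho(x)^{|\vec k|+|\vec l|}\,q_{|\vec k|}\!\left(\tfrac{y}{\rho(x)}\right)q_{|\vec l|}\!\left(\tfrac{y}{\rho(x)}\right)\omega_j(x)\,\sigma\!\left(\tfrac{y}{\rho(x)}\right).
$$
Then substitute $y=\rho(x)z$ with $\mathrm dy=\rho(x)\,\mathrm dz$. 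This factors the integral as $I_x\times I_z$, where
$$
I_z=\int_c^d q_{|\vec k|}(z)\,q_{|\vec l|}(z)\,\sigma(z)\,\mathrm dz,\qquad I_x=\sum_{j=1}^r\int_a^b p^{(\vec k)}_{\vec n-\vec k}(x)\,a^{(\vec l)}_{\vec m-\vec l,j}(x)\,\rho(x)^{|\vec k|+|\vec l|+1}\omega_j(x)\,\mathrm dx .
$$
Since the $q_n$ are orthogonal with respect to $\sigma$, $I_z=0$ whenever $|\vec k|\neq|\vec l|$. This gives the first claim, and it needs no information about $\rho$.

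Now assume $|\vec k|=|\vec l|$. Then $I_z=\|q_{|\vec k|}\|^2$ and $\rho^{|\vec k|+|\vec l|+1}=\rho^{2|\vec k|+1}$. So $I_x$ is exactly the univariate pairing between the Type~II polynomial $p^{(\vec k)}_{\vec n-\vec k}$ and the Type~I function $\sum_j a^{(\vec l)}_{\vec m-\vec l,j}\,\rho^{2|\vec k|+1}\omega_j$.

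The key observation is that both objects are built from the same perfect system $\{\rho^{2|\vec k|+1}\omega_j\}_{j=1}^r$. The superscripts $(\vec k)$ and $(\vec l)$ enter only through $|\vec k|=|\vec l|$, so the weight systems coincide. Consequently the univariate biorthogonality \eqref{eq:biorthogonality} applies, with multi-indices $\vec n-\vec k$ on the Type~II side and $\vec m-\vec l$ on the Type~I side, and it yields the three cases:
\begin{itemize}
\item If $\vec m-\vec l\le\vec n-\vec k$ componentwise, then $I_x=0$.
\item If $|\vec n-\vec k|\le|\vec m-\vec l|-2$, which is equivalent to $|\vec n|\le|\vec m|-2$ because $|\vec k|=|\vec l|$, then $I_x=0$.
\item If $|\vec n-\vec k|=|\vec m-\vec l|-1$, which is equivalent to $|\vec n|=|\vec m|-1$, then the Type~I normalization \eqref{eq:typeI_normalization} gives $I_x=1$.
\end{itemize}
Multiplying each case by $I_z=\|q_{|\vec k|}\|^2$ gives \eqref{eq:biorthogonality-koornwinder}.

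The main obstacle is justifying that the univariate biorthogonality holds, rather than simply quoting it. I would verify each case directly from Definitions~\ref{def:typeI_mops} and~\ref{def:typeII_mops}:
\begin{itemize}
\item In the componentwise case, $\deg a^{(\vec l)}_{\vec m-\vec l,j}\le m_j-l_j-1\le n_j-k_j-1$. Hence each term of $I_x$ vanishes by the Type~II orthogonality of $p^{(\vec k)}_{\vec n-\vec k}$ with respect to $\rho^{2|\vec k|+1}\omega_j$.
\item In the degree cases, $\deg p^{(\vec k)}_{\vec n-\vec k}=|\vec n|-|\vec k|$. The Type~I orthogonality \eqref{eq:typeI_orthogonality} kills $x^t$ for $t\le|\vec m|-|\vec l|-2$, which settles the second case.
\item In the third case only the leading monomial $x^{|\vec m|-|\vec l|-1}$ survives. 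Because $p^{(\vec k)}_{\vec n-\vec k}$ is monic, the normalization gives exactly $1$.
\end{itemize}
Note that no parity argument for $\rho=\sqrt u$ is needed here. The power of $\rho$ is exactly $2|\vec k|+1$, so it is absorbed into the weights. No stray odd power of $\rho$ remains, unlike in Proposition~\ref{prop:orthogonality-1}.
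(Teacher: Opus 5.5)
Your proposal is correct and follows the paper's proof: substitute $y=\rho(x)z$ to factor the integral as $I_x\times I_z$, use orthogonality of the $q_n$ to get $I_z=0$ when $|\vec k|\neq|\vec l|$, and otherwise recognize $I_x$ as the univariate Type~I/Type~II pairing for the common system $\{\rho^{2|\vec k|+1}\omega_j\}$ with indices $\vec n-\vec k$ and $\vec m-\vec l$. Your direct check of the three univariate cases, and your remark that the weight systems coincide because $|\vec k|=|\vec l|$, simply make explicit what the paper takes from \eqref{eq:biorthogonality}.
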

    \begin{proof}
        Using \eqref{eq:koornwinder-multiple-1}, \eqref{eq:koornwinder-multiple-1-type-i}, \eqref{eq:type-i-function}, \eqref{eq:weights-1} and the change of variables $y = \rho(x)\, z$, the integral might be expressed as $I=I_x\times I_z$ where
        \begin{equation*}
            \begin{aligned}
                I_x  &= \sum_{j=1}^r \int_a^b p_{\vec n-\vec k}^{(\vec k)}(x)\,a_{\vec m-\vec l}^{(\vec l)}(x)\,\rho(x)^{|\vec k|+|\vec l|+1} \,\omega_j(x)\,\mathrm dx, \\
                I_z &= \int_c^d q_{|\vec k|}(z)\,q_{|\vec l|}(z)\,\sigma(z)\, \mathrm dz.
            \end{aligned}
        \end{equation*}
        It is clear that $I_z =0$ if $|\vec k|\neq|\vec l|$ and $I_z = \|q_{|\vec k|}\|^2=\displaystyle\int_c^d (q_{|\vec k|}(z))^2\sigma(z)\,\mathrm dz$ if $|\vec k|=|\vec l|$.

        Assume that $|\vec k|=|\vec l|$. Then 
        $$
        I_x  = \sum_{j=1}^r \int_a^b p_{\vec n-\vec k}^{(\vec k)}(x)\,a_{\vec m-\vec l}^{(\vec l)}(x)\,\rho(x)^{2|\vec k|+1} \,\omega_j(x)\,\mathrm dx,
        $$
        and the result follows by using the univariate biorthogonality \eqref{eq:biorthogonality}.
    \end{proof}

    This result becomes more compelling when the multi-indices involved lie along a path of neighbour multi-indices; see the result below.

    \begin{corollary}
        Let $\Omega=\{\vec m_k:\,k\geq 0\}$ be a path of neighbour multi-indices. This means, $|\vec m_k|=k$, $k\in\NN_0$ and $\vec m_{k+1}=\vec m_k+\vec e_j$, where $\vec e_j =(0,\dots,0,\overset{(j)}{1},0,\dots,0)$. Choose $\vec n,\vec k,\vec m,\vec l\in \Omega$ such that $|\vec k|\leq|\vec n|$, $|\vec l|\le |\vec m|$ and denote $n=|\vec n|,k=|\vec k|,m=|\vec m|, l=|\vec l|$. Then,
        $$
        \int_R \mathbf P_{\vec n,\vec k}(x,y)\, \mathbf Q_{\vec m,\vec l}(x,y)\, \mathrm dx\,\mathrm dy =\|q_{k}\|^2\, \delta_{k,l}\,\delta_{n,m-1},
        $$
        where $\delta_{i,j}$ denotes the Kronecker delta $\delta_{i,j}=1$ if $i=j$ and $\delta_{i,j}=0$ otherwise. 
    \end{corollary}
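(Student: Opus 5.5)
The plan is to reduce the corollary to Proposition~\ref{prop:biorthogonality-koornwinder} and to use the nested structure of a path. Because all the multi-indices lie on $\Omega$, any two of them are comparable componentwise, and the order is decided by their lengths: $|\vec m_i|\le|\vec m_{i'}|$ if and only if $\vec m_i\le \vec m_{i'}$. This holds because each step along the path adds a single unit vector. In particular $\vec k\le\vec n$ and $\vec l\le\vec m$, so the hypotheses $0\le k_j\le n_j$ and $0\le l_j\le m_j$ of the proposition are satisfied. The proposition therefore applies.

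First, if $k\neq l$, Proposition~\ref{prop:biorthogonality-koornwinder} gives a zero integral, which matches $\delta_{k,l}=0$. So assume $k=l$. Two multi-indices on $\Omega$ with the same length coincide, hence $\vec k=\vec l$.

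Next, the value of the integral is sorted by comparing $n$ and $m$.
\begin{itemize}
\item If $n\le m-2$, the second case of \eqref{eq:biorthogonality-koornwinder} gives $0$.
\item If $n=m-1$, the third case gives $\|q_k\|^2$.
\item If $n\ge m$, then $\vec m\le \vec n$ because both lie on $\Omega$. Together with $\vec l=\vec k$ this gives $\vec m-\vec l\le\vec n-\vec k$ componentwise, so the first case gives $0$.
\end{itemize}
These three cases together give $\|q_k\|^2\,\delta_{k,l}\,\delta_{n,m-1}$.

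The only point needing care is the componentwise comparison $\vec m-\vec l\le \vec n-\vec k$. It relies on subtracting the \emph{same} vector $\vec k=\vec l$ from both sides, which is why I would establish $\vec k=\vec l$ first. The conclusion also relies on the univariate biorthogonality \eqref{eq:biorthogonality} holding for the differences $\vec n-\vec k$ and $\vec m-\vec l$. Since these differences are not themselves on a path, I would check that the proof of Proposition~\ref{prop:biorthogonality-koornwinder} only invokes \eqref{eq:biorthogonality} for those multi-indices with the shared weights $\rho^{2k+1}\omega_j$. Under the standing perfectness assumption, that check goes through.
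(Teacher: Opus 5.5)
Your proposal is correct and follows the paper's proof, which is the same reduction to Proposition~\ref{prop:biorthogonality-koornwinder} via the fact that on $\Omega$ the componentwise order coincides with the order of lengths; your write-up makes explicit the steps the paper's one-line proof leaves implicit: $\vec k\le\vec n$, $\vec l\le\vec m$, $k=l\Rightarrow\vec k=\vec l$, and the three-way split on $n$ versus $m$. Your closing caveat is unnecessary: the univariate biorthogonality \eqref{eq:biorthogonality} holds for any pair of normal multi-indices of one perfect system, so it does not matter that $\vec n-\vec k$ and $\vec m-\vec l$ lie off the path; what matters is that $|\vec k|=|\vec l|$ makes both polynomials refer to the same system $\{\rho^{2k+1}\omega_j\}$.
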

    \begin{proof}
        The proof is straightforward using Proposition~\ref{prop:biorthogonality-koornwinder} and the fact that, in $\Omega$, $\vec m_{i}\leq \vec m_{i'}$ if and only if $i\leq i'$ for every $\vec m_i,\vec m_{i'}\in \Omega$.
    \end{proof}

    As presented, the proposed construction exhibits several features inherited from univariate multiple orthogonality. Moreover, combining a multiple family with a standard family of OPs enables us to define Type~I and Type~II bivariate MOP. In the following section, we merge two families of MOPs.

    \subsection{Case 2: Combining two families of MOP}\label{ssec:koornwinder-case-2}
    Consider two perfect systems of $r$ weights: $\omega_1,\dots,\omega_r$ supported on $[a,b]$ and $\sigma_1,\dots,\sigma_r$ supported on $[c,d]$. Let $\rho(x)$ be a polynomial of degree at most $1$ which is positive on $[a,b]$. Observe that the case of $\rho(x)$ being the square root of a polynomial of degree at most $2$ is no longer considered. The reason for this modification will be explained shortly, in Remark~\ref{rem:rho-sqrt-pol}.

    Let $\vec n=(n_1,\dots,n_r),\vec k=(k_1,\dots,k_r)\in\NN_0^r$ be two multi-indices with $0\leq k_j\leq n_j$. Assume that the system $\{\rho(x)^{|\vec k|+k_j+1}\omega_j(x):1\leq j\leq r\}$ is perfect and denote by $\{p_{\vec n}^{(\vec k)}\}_{\vec n\in\NN^r_0}$ the corresponding Type~II MOP. Denote by $\{q_{\vec n}\}_{\vec n\in\NN^r_0}$ the Type~II MOP with respect to the system $\sigma_1,\dots,\sigma_r$. Then, define the polynomials
    \begin{equation}
        \label{eq:koornwinder-multiple-2}
        \mathbf P_{\vec n, \vec k}(x,y)=p_{\vec n-\vec k}^{(\vec k)}(x)\, [\rho(x)]^{|\vec k|} \, q_{\vec k}\left(\frac{y}{\rho(x)}\right).
    \end{equation}
    \begin{remark}\label{rem:rho-sqrt-pol}
        If $\rho(x)=\sqrt{u(x)}$ with $\deg(u(x))\leq 2$, then
        $$
        [\rho(x)]^{|\vec k|} q_{\vec k}\left(\frac{y}{\rho(x)}\right) = c_0 u(x)^{|\vec k|/2} + c_1 u(x)^{(|\vec k|-1)/2}y+\cdots+ c_{|\vec k|}y^{|\vec k|}
        $$
        so that there might be terms which are not polynomials.
        \end{remark}
    
    Consider the bivariate weights
    \begin{equation}
        \label{eq:weights-2}
        W_j(x,y)=\omega_j(x)\,\sigma_j\left(\frac y {\rho(x)}\right),
    \end{equation}
    supported on $R$ given in \eqref{eq:support-1}.
    
    With these elements, we are about to prove the multiple orthogonality relations satisfied by \eqref{eq:koornwinder-multiple-2}.
    \begin{proposition}\label{prop:orthogonality-2}
        Let $\vec n=(n_1,\dots,n_r),\vec k=(k_1,\dots,k_r)\in\NN_0^r$ be two multi-indices with $0\leq k_j\leq n_j$ and consider the polynomial $\mathbf P_{\vec n, \vec k}(x,y)$ defined in \eqref{eq:koornwinder-multiple-2}. Then, the following relations hold:
        \begin{equation}
            \label{eq:orthogonality-2}
            \int_R \mathbf P_{\vec n, \vec k}(x,y)\, x^t y^s\, W_j(x,y)\, \mathrm dx\,\mathrm dy =0\qquad\text{ if }t<n_j-k_j\text{ or }s<k_j,\quad j=1,\dots,r.
        \end{equation}
        Moreover, \eqref{eq:orthogonality-2} also holds if $t+s<n_j$, $j=1,\dots,r$.
    \end{proposition}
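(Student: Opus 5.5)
The plan is to reproduce the separation-of-variables argument from the proof of Proposition~\ref{prop:orthogonality-1}, now with the second family multiple rather than standard. Insert \eqref{eq:koornwinder-multiple-2} and \eqref{eq:weights-2} into the integral over $R$ given by \eqref{eq:support-1}, and change variables $y=\rho(x)z$, $\mathrm dy=\rho(x)\,\mathrm dz$. Since $\rho$ is a polynomial of degree at most $1$, positive on $[a,b]$, the domain $R$ becomes the rectangle $(a,b)\times(c,d)$. The integrand then factors, so the integral equals $I_x\times I_z$ with
$$
I_x=\int_a^b p^{(\vec k)}_{\vec n-\vec k}(x)\,x^t\,\rho(x)^{|\vec k|+s+1}\,\omega_j(x)\,\mathrm dx,\qquad I_z=\int_c^d q_{\vec k}(z)\,z^s\,\sigma_j(z)\,\mathrm dz .
$$
The crucial point, compared with Case~1, is that the same index $j$ now appears in both factors. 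This is why the weight $\rho^{|\vec k|+k_j+1}\omega_j$ depends on $j$.

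First, handle $I_z$. By the Type~II orthogonality \eqref{eq:typeII_orthogonality} of $q_{\vec k}$ with respect to $\sigma_j$, we get $I_z=0$ whenever $s<k_j$. This gives the first alternative with no restriction on $t$.

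Next, suppose $s\ge k_j$ and write $s=k_j+\delta$ with $\delta\in\NN_0$. Then $\rho^{|\vec k|+s+1}=\rho^{\delta}\,\rho^{|\vec k|+k_j+1}$. Since $\rho$ is a polynomial of degree at most $1$, the factor $x^t\rho(x)^\delta$ is a polynomial of degree at most $t+\delta$. The Type~II orthogonality of $p^{(\vec k)}_{\vec n-\vec k}$ with respect to the $j$-th weight $\rho^{|\vec k|+k_j+1}\omega_j$ of the (assumed perfect) modified system then gives $I_x=0$ whenever $t+\delta<n_j-k_j$, that is, whenever $t+s<n_j$. Combining the two cases: if $t+s<n_j$, then either $s<k_j$, so $I_z=0$, or $s\ge k_j$ with $t+\delta<n_j-k_j$, so $I_x=0$. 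This proves the "moreover" statement, and in particular orthogonality to all polynomials of degree $<n_j$ with respect to $W_j$.

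The main obstacle is the alternative "$t<n_j-k_j$" when $s>k_j$. The argument above yields vanishing only when $t+(s-k_j)<n_j-k_j$. The factor $\rho^\delta$ raises the $x$-degree, and Type~II orthogonality alone cannot absorb this for large $\delta$ unless $\rho$ is constant. I would therefore first try to establish this alternative in the range $s\le k_j$, where $\delta=0$ and the condition is exactly $t<n_j-k_j$. I would then check whether the intended reading of \eqref{eq:orthogonality-2} is the combined condition ($s<k_j$, or $t+s<n_j$), which is what the factorisation actually delivers. This mirrors the first bullet in the proof of Proposition~\ref{prop:orthogonality-1}.
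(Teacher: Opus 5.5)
Your route is the paper's own: substitute $y=\rho(x)z$, factor the integral as $I_x\times I_z$, and use Type~II orthogonality of $q_{\vec k}$ against $\sigma_j$ when $s<k_j$. Then write $s=k_j+\delta$ and use Type~II orthogonality of $p^{(\vec k)}_{\vec n-\vec k}$ against $\rho^{|\vec k|+k_j+1}\omega_j$. Everything you actually prove is correct: vanishing when $s<k_j$, and vanishing when $t+s<n_j$.

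Your doubt about the clause ``$t<n_j-k_j$'' is justified, and you should settle it rather than hedge. When $\deg\rho=1$ that clause is false, so no proof of it can exist. The paper reaches it through the sentence ``$I_x=0$ whenever $t+\delta<n_j-k_j$, and since $\delta\geq 0$, $I_x=0$ if $t<n_j-k_j$''. That step reverses the implication; the same slip appears in the first bullet of the proof of Proposition~\ref{prop:orthogonality-1}.

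A counterexample comes from the triangle construction \eqref{eq:multiple-triangle-koornwinder}, with $\rho(x)=1-x$, $\omega_j(x)=x^{\alpha_j}(1-x)^{\beta_j+\gamma}$ and $\sigma_j(z)=z^{\beta_j}(1-z)^{\gamma}$. Take $\vec k=\vec 0$, $\vec n=\vec e_j$, $t=0$ and $s=1$. Then $\mathbf P_{\vec n,\vec k}(x,y)=p(x)$, the monic degree-one polynomial orthogonal to constants with respect to $w(x)=x^{\alpha_j}(1-x)^{\beta_j+\gamma+1}$. Moreover $I_z=\int_0^1 z^{\beta_j+1}(1-z)^{\gamma}\,\mathrm dz>0$, and
\[
I_x=\int_0^1 p(x)\,(1-x)\,w(x)\,\mathrm dx=-\int_0^1 p(x)\,x\,w(x)\,\mathrm dx=-\int_0^1 p(x)^2\,w(x)\,\mathrm dx<0.
\]
So the integral is nonzero even though $t=0<1=n_j-k_j$.

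The correct statement is exactly what your factorisation delivers: the integral vanishes if $s<k_j$ or $t+s<n_j$. The extra alternative $t<n_j-k_j$ holds in general only when $\rho$ is constant, as in the square of Section~\ref{ssec:MOP-square}. Your proposed fallback to the range $s\le k_j$ adds nothing new. For $s<k_j$ it is already covered, and for $s=k_j$ the condition $t<n_j-k_j$ is just $t+s<n_j$.
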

    \begin{proof}
        This proof is analogous to that of Proposition~\ref{prop:orthogonality-1}. Using  the change of variables $y=\rho(x)z$ it is possible to write the integral as the product of two integrals $I_x\times I_z$ where
        \begin{equation*}
        \begin{aligned}
            I_x &= \int_a^b p_{\vec n-\vec k}^{(\vec k)}(x)\,x^t\, [\rho(x)]^{|\vec k|+s+1}\omega_j(x)\,\mathrm d x,\\
            I_z &= \int_{c}^d q_{\vec k}(z)\, z^s\, \sigma_j(z)\,\mathrm d z.
        \end{aligned}
    \end{equation*}
    By Type~II orthogonality of the polynomials $ q_{\vec k}$, we have that $I_z=0$ if $s<k_j$.

    Assume that $s=k_j+\delta$, with $\delta$ a non-negative integer. Then, $I_x$ might be rewritten as
    $$
    \int_a^b p_{\vec n-\vec k}^{(\vec k)}(x)\,x^t\rho(x)^\delta\, [\rho(x)^{|\vec k|+k_j+1}\omega_j(x)]\,\mathrm d x.
    $$
    As $\rho(x)$ is a polynomial of degree $1$, $I_x=0$ whenever $t+\delta<n_j-k_j$, and since $\delta\geq 0$, $I_x=0$ if $t<n_j-k_j$. Finally, using $\delta=s-k_j$, $I_x=0$ whenever $t+s<n_j$. 

    \end{proof}

    \begin{remark}
        Concerning the lack of a definition of Type~I MOPs  using \eqref{eq:koornwinder-multiple-2}, notice that in the construction of  Section~\ref{sssec:type1Koornwinder}, in the proof of Proposition~\ref{prop:orthogonality-1-type-i}, after the change of variables, the resulting integral can be written as $I_z\sum_{j=1}^r I_x(j) $ where the factor $I_z$ is independent of $j$. This separation allows one to factor out $I_z$ and then apply the Type I orthogonality relations to the remaining sum over $j$. However, when two families of MOPs  are combined, the corresponding integral takes the form $\sum_{j=1}^r I_x(j)\times I_z(j)$ and both factors depend on $j$. Consequently, the argument used to construct Type I bivariate MOPs  does not extend in a straightforward way, and only a Type II construction is obtained in this setting.
    \end{remark}

    Now, we apply this method to construct MOPs  on several planar domains. 
    
    \section{Multiple Orthogonal Polynomials on planar domains}\label{sec:examples}

    In this section, we employ the Koornwinder construction described above to define MOPs  on different domains. Specifically, we present MOPs  on a parabolic domain (Section~\ref{ssec:MOP-parabolic}), on a square (Section~\ref{ssec:MOP-square}), on the first quadrant (Section~\ref{ssec:MOP-quadrant}), on a wedge  (Section~\ref{ssec:MOP-wedge}) and on the triangle (Section~\ref{ssec:MOP-triangle}), see Figure~\ref{fig:domains}.

    \begin{figure}[h]
        \centering
        \begin{tabular}{ccc}
             \includegraphics[height=3.5cm]{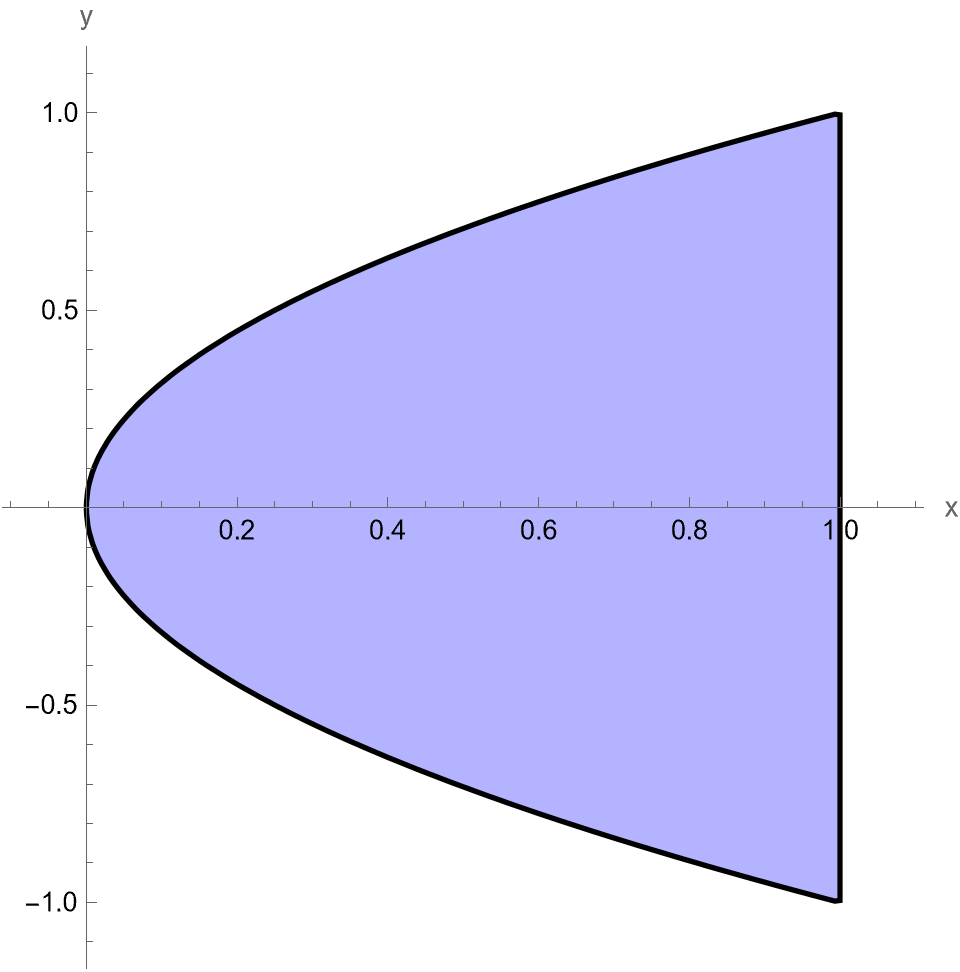} & 
             \includegraphics[height=3.5cm]{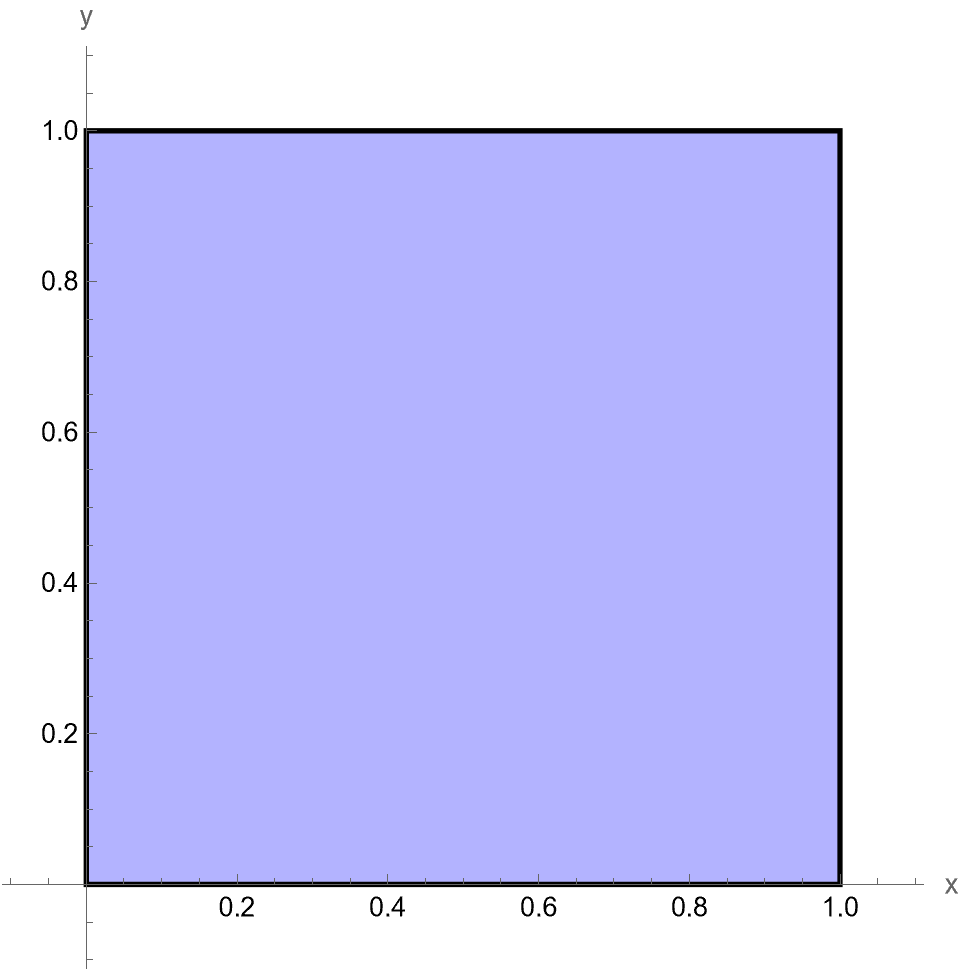} & 
            \includegraphics[height=3.5cm]{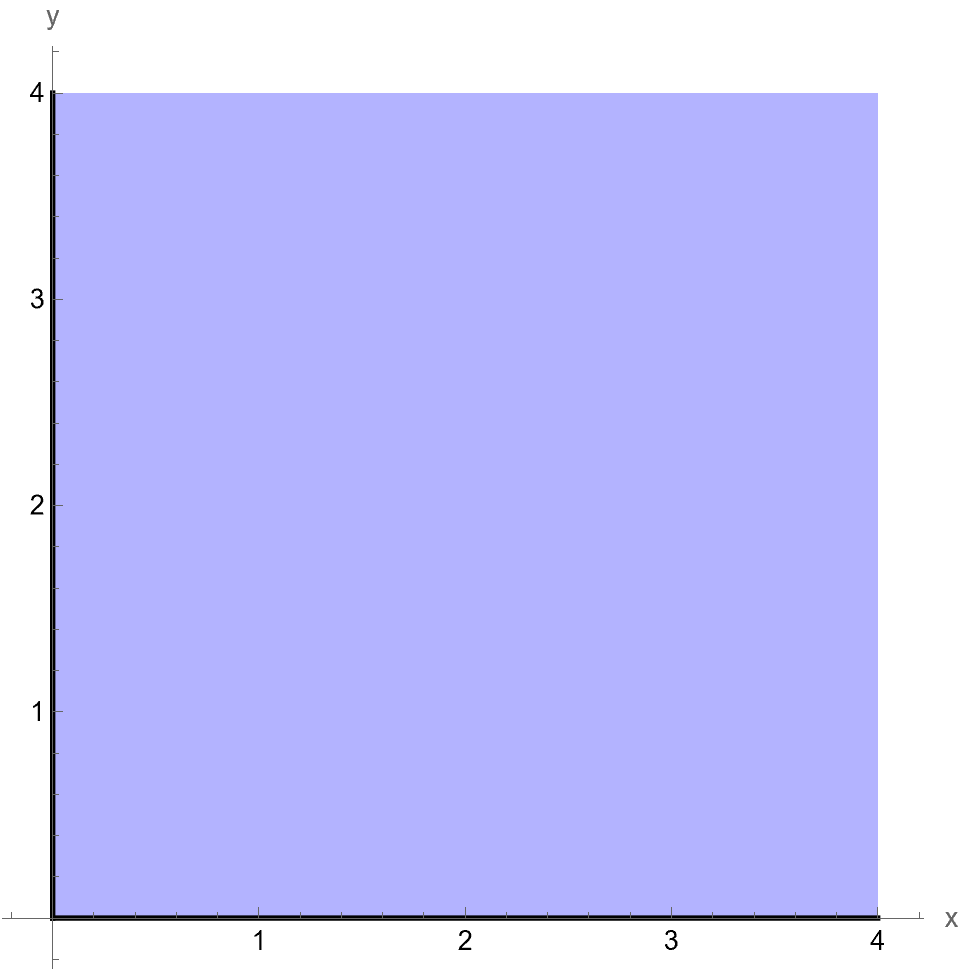} \\
             (a) Parabolic domain & (b) Square $[0,1]^2$ & (c)  First quadrant $(\RR_0^+)^2$ 
        \end{tabular}
        \begin{tabular}{cc}
             \includegraphics[height=3.5cm]{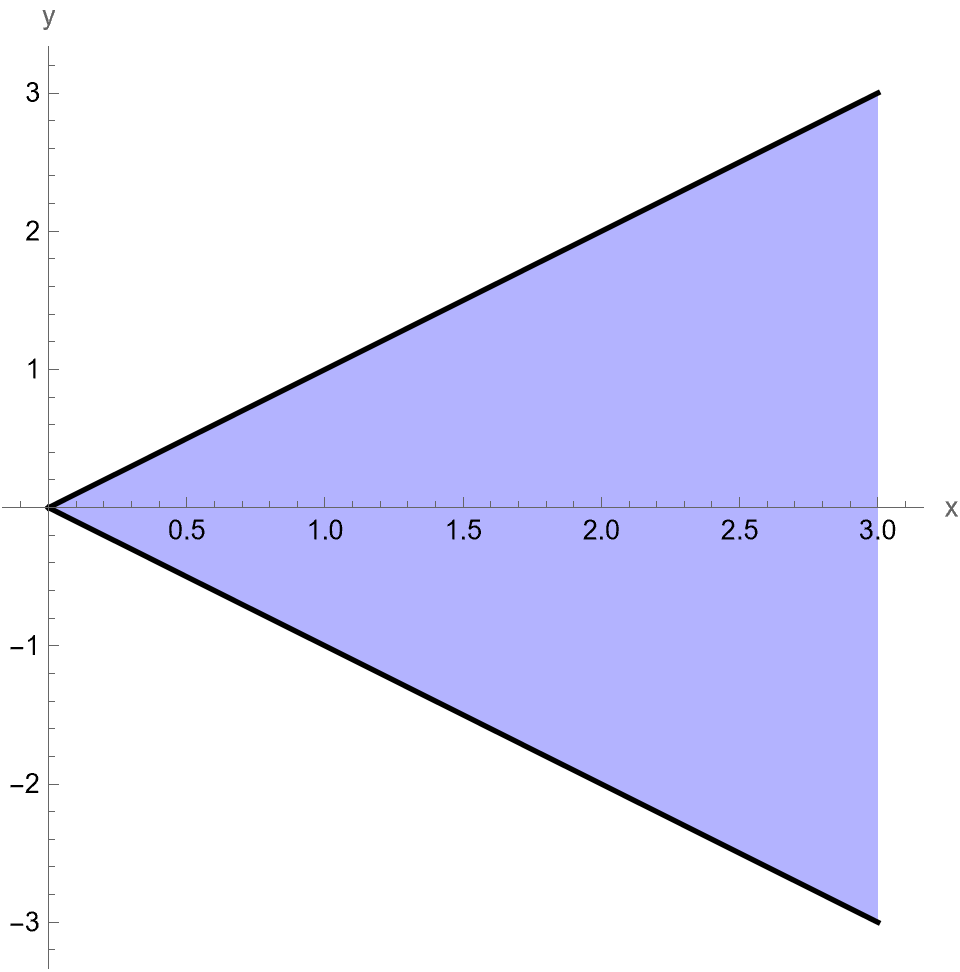} & 
             \includegraphics[height=3cm]{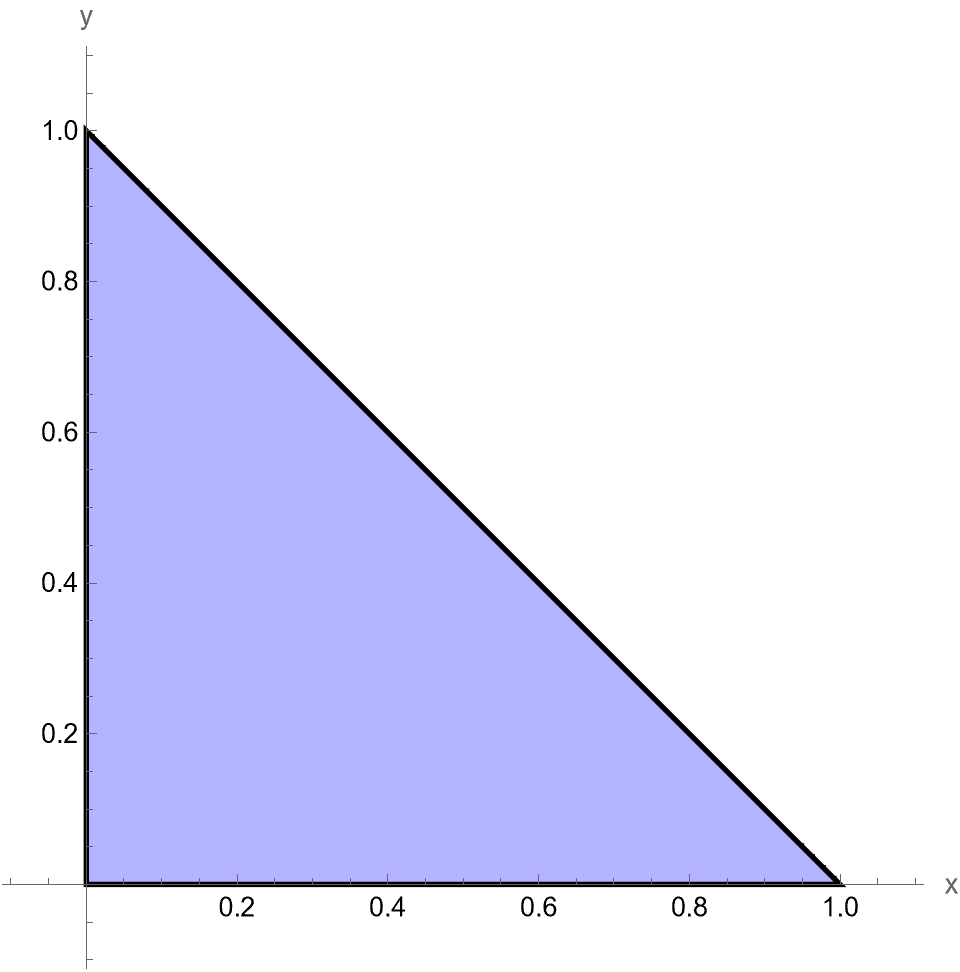}  \\
             (d) Wedge $\mathbb V^2$ & (e) Triangle $T$ 
        \end{tabular}
        \caption{The two-dimensional domains considered in this paper}
        \label{fig:domains}
    \end{figure}

    \subsection{Multiple Orthogonal Polynomials on a Parabolic Domain}\label{ssec:MOP-parabolic}

    First, we employ the Koornwinder construction to introduce MOPs  on a planar domain bounded by an algebraic curve: the parabolic region $R = \left\{ (x,y) \in \mathbb{R}^2 : y^2 < x < 1 \right\}$. Orthogonal polynomials on $R$ associated with weight functions of the form $W_{\alpha,\beta}(x,y) = (x-y^2)^\alpha (1-x)^\beta $, have been described in \cite{Koo75, DX14}. To the best of our knowledge, the results developed in this section represent the first extension of multiple orthogonal polynomials to a parabolic domain to date.

    We will use the method explained in Section~\ref{ssec:koornwinder-case-1} (Case 1), where we combine a family of MOPs  with a standard orthogonal polynomials sequence to obtain MOPs. Then, consider the weights
    \begin{align}
        \omega_j(x)&=x^\alpha(1-x)^{\beta_j}, \quad x\in[0,1], \quad j=1,\dots,r, & \sigma(x)=(1-x^2)^\alpha,\quad x\in[-1,1],
    \end{align}
    and the function $\rho(x)=\sqrt{x}$. Let $\vec n=(n_1,\dots,n_r)$ and $\vec k=(k_1,\dots,k_r)$ be two multi-indices such that $0\leq k_j\leq n_j$, $j=1,\dots,r$. The MOPs  with respect to the weights $\rho(x)^{2|\vec k|+1}\omega_j(x)=x^{\alpha+|\vec k|+1/2}(1-x)^{\beta_j}$ on $[0,1]$ are the Jacobi--Pi\~neiro polynomials denoted by $P^{(\alpha+|\vec k|+1/2,\vec\beta)}_{\vec n}(x)$ (Type~II) and $A^{(\alpha+|\vec k|+1/2,\vec\beta)}_{\vec n,j}(x)$, $j=1,\dots,r$ (Type~I); which indeed are the standard Jacobi--Pi\~neiro polynomials under the substitution $x\mapsto 1-x$: $P^{(\vec\beta,\alpha+|\vec k|+1/2)}_{\vec n}(1-x)$ and $A^{(\vec\beta,\alpha+|\vec k|+1/2)}_{\vec n,j}(1-x)$, introduced in Section~\ref{subsubsec:jacobi_pineiro} (Case 2). On the other hand, the OP with respect to the weight $(1-x^2)^\alpha$ are the Gegenbauer polynomials $C^{(\alpha+1/2)}_n(x)$, explained in Section~\ref{subsubsec:gegenbauer_standard}.

    Then, we define the polynomials
    \begin{equation}\begin{aligned}
        \mathbf{P}_{\vec n,\vec k}^{(\alpha,\vec\beta)}(x,y) &= P^{(\alpha+|\vec k|+1/2,\vec\beta)}_{\vec n-\vec k}(x)\,(\sqrt{x})^{|\vec k|}\,C_{|\vec k|}^{(\alpha+1/2)}\left(\dfrac{y}{\sqrt x}\right),\\
        \mathbf{A}_{\vec n,\vec k,(j)}^{(\alpha,\vec\beta)}(x,y) &= A^{(\alpha+|\vec k|+1/2,\vec\beta)}_{\vec n-\vec k,j}(x)\,(\sqrt{x})^{|\vec k|}\,C_{|\vec k|}^{(\alpha+1/2)}\left(\dfrac{y}{\sqrt x}\right),\qquad j=1,\dots,r.
    \end{aligned}   
    \end{equation}
    Regarding \eqref{eq:weights-2} and \eqref{eq:support-1}, introduce the following weights and their corresponding support (see Figure~\ref{fig:domains}(a)).
    \begin{equation}
        \label{eq:weights-parabolic}
        W_j(x,y)=(x-y^2)^\alpha (1-x)^{\beta_j},\qquad(x,y)\in R=\{(x,y)\in\RR^2\,:\,0<x<1,\ y^2<x\}.
    \end{equation}
    By Proposition~\ref{prop:orthogonality-1}, we have that
    $$
    \int_R \mathbf{P}_{\vec n,\vec k}^{(\alpha,\vec\beta)}(x,y) \, x^t y^s\, (x-y^2)^\alpha (1-x)^{\beta_j}\, \mathrm dx\, \mathrm dy =0\qquad\text{ if } t<n_j-k_j\text{ or } s<k_j, \quad j=1,\dots,r.
    $$
    By Proposition~\ref{prop:orthogonality-1-type-i},
    $$
    \sum_{j=1}^r \int_R \mathbf{A}_{\vec n,\vec k,(j)}^{(\alpha,\vec\beta)}(x,y) \, x^t y^s\, (x-y^2)^\alpha (1-x)^{\beta_j}\, \mathrm dx\, \mathrm dy =0\qquad\text{ if } t<|\vec n|-|\vec k|-1\text{ or } s<|\vec k|.
    $$\\

    In the following section, we present the first example of Case 2 from Section~\ref{sec:koornwinder}, in which two families of MOPs  are combined. We begin with a simple example that results in the tensor product of univariate MOPs : the square.

    \subsection{Multiple Orthogonal Polynomials on the square}\label{ssec:MOP-square}

    A natural question about the extension of MOPs  to the bivariate case concerns how the product of two standard families of univariate MOPs  works. Indeed, this question was first addressed by the authors in \cite{FV26a} for arbitrary systems of measures. In this section, it is shown that the tensor product of univariate Jacobi--Pi\~neiro MOPs  is a particular case of the Koornwinder method we introduce.

    Now, we focus on the techniques from Section~\ref{ssec:koornwinder-case-2} (Case 2). Let $\alpha_1,\dots,\alpha_r,\beta,\gamma_1,\dots,\gamma_r,\lambda>-1$ be such that $\alpha_j-\alpha_i, \gamma_j-\gamma_i\notin \ZZ$ if $i\neq j$, consider the weights
    \begin{equation}
        \omega_j(x) =x^{\alpha_j}(1-x)^{\beta},\qquad \sigma_j(x)=x^{\gamma_j}(1-x)^\lambda,\qquad x\in[0,1],\quad j=1,\dots,r,
    \end{equation}
    and choose $\rho(x)=1$. Let $\vec n=(n_1,\dots,n_r)$ and $\vec k=(k_1,\dots,k_r)$ be two multi-indices such that $0\leq k_j\leq n_j$, $j=1,\dots,r$ and denote $\vec\alpha=(\alpha_1,\dots,\alpha_r)$ and $\vec\gamma=(\gamma_1,\dots,\gamma_r)$. It is clear that the MOPs  with respect to the weights $\{\rho(x)^{|\vec k|+k_j+1}\omega_j(x):\,1\leq j\leq r\}=\{\omega_1,\dots,\omega_r\}$ and $\{\sigma_1,\dots,\sigma_r\}$ are the Jacobi--Pi\~neiro polynomials $P^{(\vec\alpha,\beta)}_{\vec n}(x)$ and $P^{(\vec\gamma,\lambda)}_{\vec n}(x)$, given in Section~\ref{subsubsec:jacobi_pineiro} (Case 1). 

    Using \eqref{eq:koornwinder-multiple-2}, \eqref{eq:weights-2} and \eqref{eq:support-1}, we define the polynomials
    \begin{equation*}
        \mathbf P^{(\vec\alpha,\beta,\vec\gamma,\lambda)}_{\vec n,\vec k}(x,y)= P^{(\vec\alpha,\beta)}_{\vec n-\vec k}(x)P^{(\vec\gamma,\lambda)}_{\vec k}(y),
    \end{equation*}
    and the weights $W_j(x,y)=x^{\alpha_j}y^{\gamma_j}(1-x)^\beta(1-y)^\lambda$, $j=1,\dots,r$,  supported on the square $[0,1]\times[0,1]$ (Figure~\ref{fig:domains}(b)).

    Then, the following multiple orthogonality conditions hold
    $$
    \int_0^1\int_0^1  \mathbf P^{(\vec\alpha,\beta,\vec\gamma,\lambda)}_{\vec n,\vec k}(x,y)\, x^t y^s \, x^{\alpha_j}y^{\gamma_j}(1-x)^\beta(1-y)^\lambda\, \mathrm dx\, \mathrm dy =0\quad\text{ if }t<n_j-k_j\text{ or }s<k_j,\quad j=1,\dots,r.
    $$

    Observe the analogy between this construction and the one in \cite[Section 2.6.1]{DX14}, where Jacobi polynomials are employed to define standard OPs on the square $[-1,1]\times[-1,1]$ as the product of two univariate families of OPs.

    Note that, in the considered construction, the parameters $\alpha$ and $\gamma$ are varied in order to generate MOP. More generally, any alternative choice of two free parameters can be employed to obtain MOPs  on the square with respect to the corresponding family of weight functions. Furthermore, by holding three parameters fixed and varying only one, the Case~1 construction (Section~\ref{ssec:koornwinder-case-1}) can be applied to form MOPs  on $[0,1]^2$ by coupling Jacobi--Pi\~neiro polynomials with appropriately shifted classical Jacobi polynomials.
    
    %This means, assuming that the considered parameters satisfy the AT condition for the existence of Jacobi--Pi\~neiro polynomials:
    
    %\begin{itemize}
    %    \item Define $\vec\alpha=(\alpha_1,\dots,\alpha_r),\beta,\gamma,\vec\lambda=(\lambda_1,\dots,\lambda_r)$. Then $\mathbf P^{(\vec\alpha,\beta,\gamma,\vec\lambda)}_{\vec n,\vec k}(x,y)= P^{(\vec\alpha,\beta)}_{\vec n-\vec k}(x)P^{(\gamma,\vec\lambda)}_{\vec k}(y)$ are MOP with respect to $x^{\alpha_j}y^\gamma (1-x)^{\beta}(1-y)^{\lambda_j}$, $j=1,\dots,r$.
    %    \item Define $\alpha,\vec\beta=(\beta_1,\dots,\beta_r),\vec\gamma=(\gamma_1,\dots,\gamma_r),\lambda$. Then $\mathbf P^{(\alpha,\vec\beta,\vec\gamma,\lambda)}_{\vec n,\vec k}(x,y)= P^{(\alpha,\vec\beta)}_{\vec n-\vec k}(x)P^{(\vec\gamma,\lambda)}_{\vec k}(y)$ are MOP with respect to $x^{\alpha}y^{\gamma_j} (1-x)^{\beta_j}(1-y)^{\lambda}$, $j=1,\dots,r$.
    %    \item Define $\alpha,\vec\beta=(\beta_1,\dots,\beta_r),\gamma,\vec\lambda=(\lambda_1,\dots,\lambda_r)$. Then $\mathbf P^{(\alpha,\vec\beta,\gamma,\vec\lambda)}_{\vec n,\vec k}(x,y)= P^{(\alpha,\vec\beta)}_{\vec n-\vec k}(x)P^{(\gamma,\vec\lambda)}_{\vec k}(y)$ are MOP with respect to $x^{\alpha}y^{\gamma} (1-x)^{\beta_j}(1-y)^{\lambda_j}$, $j=1,\dots,r$.
    %\end{itemize}

    Next, we present the first example of MOPs  on an unbounded domain based on multiple Laguerre polynomials.

    \subsection{Multiple Orthogonal Polynomials on the first quadrant}\label{ssec:MOP-quadrant}

    One of the most employed unbounded domains is the first quadrant, which can be denoted by $(\RR_0^+)^2$ or $[0,+\infty)^2$, see Figure~\ref{fig:domains}(c). In \cite[Section~5.6]{FPP12}, the Koornwinder construction is utilized to get OPs on $(\mathbb R_0^+)^2$ with respect to a semi-classical weight. Using multiple Laguerre polynomials, this construction is extended to the multiple case. Thus, we use the tools from Section~\ref{ssec:koornwinder-case-2}.

    Consider $\beta_1,\dots,\beta_r>-1$ and $\alpha_1,\dots,\alpha_r$ such that $\alpha_j+\beta_j>-1$, $j=1,\dots,r$, and define the weights
    \begin{equation}
        \omega_j(x)=x^{\alpha_j+\beta_j}e^{-x}\quad x\in\RR_0^+,\qquad\sigma_j(x)=x^{\beta_j}e^{-x},\quad x\in\mathbb R^+_0,
    \end{equation}
    for every $j=1,\dots,r$. We use $\rho(x)=x$. Now, for $\vec n=(n_1,\dots,n_r)$, $\vec k=(k_1,\dots,k_r)\in\NN^r_0$ with $0\le k_j\le n_j$, $j=1,\dots,r$, we consider MOPs  with respect to $\rho(x)^{|\vec k|+k_j+1}\omega_j(x)=x^{\alpha_j+\beta_j+|\vec k|+k_j+1}\,e^{-x}$, which are the multiple Laguerre polynomials of the first kind $L_{\vec n}^{\widetilde\alpha}(x)$ with $(\widetilde\alpha)_j=\alpha_j+\beta_j+|\vec k|+k_j+1$, introduced in Section~\ref{subsubsec:multiple_laguerre}. Besides, we need MOPs  with respect to the weights $\sigma_j(x)$, which clearly are the multiple Laguerre polynomials of the first kind $L^{\vec\beta}_{\vec n}(x)$. Regarding \eqref{eq:koornwinder-multiple-2}, \eqref{eq:weights-2} and \eqref{eq:support-1}, define the polynomials
    $$
    \mathbf L_{\vec n,\vec k}^{\vec\alpha,\vec\beta}(x,y)=L_{\vec n-\vec k}^{\widetilde\alpha}(x)\, x^{|\vec k|}\, L_{\vec k}^{\vec \beta}\left(\dfrac y x \right),
    $$
    and the semiclassical weight
    $$
    W_j(x,y)=x^{\alpha_j}\,y^{\beta_j}\, e^{-(x+\frac y x)}, \qquad (x,y)\in(\mathbb R^+_0)^2.
    $$
    Then, we have that these polynomials satisfy
    $$
    \int_0^\infty\int_0^\infty \mathbf L_{\vec n,\vec k}^{\vec\alpha,\vec\beta}(x,y)\, x^t y^s \, x^{\alpha_j}\,y^{\beta_j}\, e^{-(x+\frac y x)}\, \mathrm dx\,\mathrm dy  =0\qquad\text{ if }t<n_j-k_j\text{ or }s<k_j,\qquad j=1,\dots,r.
    $$

    Now, we apply Case 1 of the Koornwinder construction in Section~\ref{sec:koornwinder} to get MOPs  on a different unbounded domain that was also considered in the standard orthogonal polynomial setting in \cite{FPP12}.

    \subsection{Multiple Orthogonal Polynomials on the Wedge} \label{ssec:MOP-wedge}

    As another example of unbounded support, we will introduce MOPs  on the domain known as the `wedge', also known as the two-dimensional `cone', see Figure~\ref{fig:domains}(d). This construction was first introduced in \cite{FPP12}, and further studied and extended in \cite{ABFX23, Xu20}; these polynomials are also called bivariate Laguerre--Jacobi polynomials, as they combine Laguerre and Jacobi polynomials. In our multiple orthogonal extension, we will use Laguerre MOPs  together with Jacobi (Gegenbauer) standard OPs. We therefore apply the tools from Section~\ref{ssec:koornwinder-case-1} (Case 1).
    Let $\mu>-1$ and $\alpha_1,\dots,\alpha_r$ be such that $\alpha_j+2\mu>-1$. Introduce the weights
    \begin{equation}
        \omega_j(x)=x^{\alpha_j+2\mu}e^{-x},\quad x\in\RR^+,\quad j=1,\dots,r, \qquad \sigma(x)=(1-x^2)^\mu,\quad x\in[-1,1],
    \end{equation}
    and the function $\rho(x)=x$. Consider two multi-indices $\vec n=(n_1,\dots,n_r)$ and $\vec k=(k_1,\dots,k_r)$ such that $0\leq k_j\leq n_j$, $j=1,\dots,r$. MOPs  with respect to the weights $\rho(x)^{2|\vec k|+1}\omega_j(x)=x^{\alpha_j+2\mu+2|\vec k|+1}e^{-x}$, $j=1,\dots,r$ are the multiple Laguerre polynomials of the first kind (introduced in Section~\ref{subsubsec:multiple_laguerre}), which will be denoted as $L_{\vec n}^{\tilde\alpha}(x)$ (Type~II) and $B_{\vec n,j}^{\tilde\alpha}(x)$ (Type~I), where the components of $\tilde\alpha\in\RR^r$ are $(\tilde\alpha)_j=\alpha_j+2|\vec k|+2\mu+1$, $j=1,\dots,r$. On the other hand, the standard OPs with respect to $\sigma(x)$ are the Gegenbauer polynomials $C_n^{\mu+1/2}(x)$, explained in Section~\ref{subsubsec:gegenbauer_standard}.

    Using \eqref{eq:koornwinder-multiple-1} and \eqref{eq:koornwinder-multiple-1-type-i} we introduce:
    \begin{equation*}
    \begin{aligned}
        \mathbf L^{(\vec\alpha,\mu)}_{\vec n,\vec k}(x,y)&=L_{\vec n-\vec k}^{(\tilde\alpha)}(x)\, x^{|\vec k|}\, C_{|\vec k|}^{(\mu+1/2)}\left(\dfrac y x\right).\\
        \mathbf B^{(\vec\alpha,\mu)}_{\vec n,\vec k,(j)}(x,y)&=B_{\vec n-\vec k,j}^{(\tilde\alpha)}(x)\, x^{|\vec k|}\, C_{|\vec k|}^{(\mu+1/2)}\left(\dfrac y x\right),\qquad j=1,\dots,r.
    \end{aligned}
    \end{equation*}
    By \eqref{eq:weights-1} and \eqref{eq:support-1}, consider the weights
    $$
    W_j(x,y)=x^{\alpha_j}e^{-x}(x^2-y^2)^\mu,\qquad (x,y)\in \mathbb V^2=\{(x,y)\in\RR^2\,:\,  x>0, \, -x<y<x\}.
    $$
    The domain $\mathbb V^2$ is called the wedge, and it is the bivariate version of the cone, understanding the cone as the set $ \mathbb V^{d+1} =\{(\mathbf x,t)\in\RR^{d+1}\, :\, \|\mathbf x\|\leq t,\, \mathbf x\in\RR^d, t\geq 0\} $ \cite{ABFX23} (see Section~\ref{sec:MOP-cone}). By Propositions~\ref{prop:orthogonality-1}~and~\ref{prop:orthogonality-1-type-i}, we have that
    \begin{equation*}
    \begin{aligned}
        &\int_{\mathbb V^2} \mathbf L^{(\vec\alpha,\mu)}_{\vec n,\vec k}(x,y)\, x^t y^s\, x^{\alpha_j}e^{-x}(x^2-y^2)^\mu\,\mathrm dx\,\mathrm dy = 0\qquad\text{ if }t<n_j-k_j\text{ or }s<k_j, \qquad j=1,\dots,r,\\
        &\sum_{j=1}^r\int_{\mathbb V^2} \mathbf B^{(\vec\alpha,\mu)}_{\vec n,\vec k,(j)}(x,y)\, x^t y^s\, x^{\alpha_j}e^{-x}(x^2-y^2)^\mu\,\mathrm dx\,\mathrm dy = 0\qquad\text{ if }t<|\vec n|-|\vec k|-1\text{ or } s<|\vec k|.\\
    \end{aligned}
    \end{equation*}

    As one last example, we study MOPs  on a triangle.

    \subsection{Multiple Orthogonal Polynomials on the Triangle}\label{ssec:MOP-triangle} 
    
    Multiple Orthogonal Polynomials on the triangle were first studied in \cite{FFV26}, where a definition in terms of a Rodrigues-type formula, extending \eqref{eq:rodrigues-jp}, was given. However, the non-uniqueness of OPs on multivariate domains is well known. Therefore, it is to be expected that MOPs  are not unique either. In this section, we will present a new definition of MOPs  on the triangle using the method introduced in Section~\ref{ssec:koornwinder-case-2}. Furthermore, we present another extension of OPs on the triangle based on a basis that is no longer constructed by Koornwinder's method, but that still admits a representation in terms of Jacobi polynomials, see \cite[Section 4.1]{Xu16}.

    First, we fix some notation. Throughout this section, we will study multiple orthogonal polynomials on the domain
    \begin{equation}
        T = \{(x,y)\in\RR^2\,:\, 0\leq x\leq 1,\, 0\leq x+y\leq 1\},
    \end{equation}
    which is the triangle whose vertices are $(0,0)$, $(1,0)$ and $(0,1)$, see Figure~\ref{fig:domains}(e). In $T$, we define the weights
    \begin{equation}
        \label{eq:weights-triangle}
            W_j(x,y)=x^{\alpha_j}y^{\beta_j}(1-x-y)^{\gamma},\qquad(x,y)\in T,\qquad j=1,\dots,r.
    \end{equation}
     
    Let $\vec n=(n_1,\dots,n_r),\vec k=(k_1,\dots,k_r)\in\NN_0^r$ be two multi-indices with $0\leq k_j\leq n_j$. The first study of MOPs on $T$ with respect to weights of the form \eqref{eq:weights-triangle} was given in \cite{FFV26}, where polynomials given by the bivariate Rodrigues formula
    \begin{equation}\label{eq:rodrigues-triangle}
    \begin{split}
    	U^{(\vec \alpha,\vec \beta,\gamma)}_{\vec n,\vec k}(x,y)  = \dfrac{1}{W_r(x,y)} &\frac{\partial^{n_r}}{\partial x^{n_r-k_r}\partial y^{k_r}}\Big(x^{n_r-k_r+\alpha_r-\alpha_{r-1}}y^{k_r+\beta_r-\beta_{r-1}}  \cdots \times \\ & \times  \frac{\partial^{n_1}}{\partial x^{n_1-k_1}\partial y^{k_1}}(x^{n_1-k_1+\alpha_1}y^{k_1+\beta_1}(1-x-y)^{\gamma+n_1+\cdots+n_r})\Big).\\
    \end{split}
    \end{equation}
    were proven to be orthogonal to polynomials of degree less than $n_j$ with respect to the $j$-th weight. Now, we present a different family of MOPs  on the triangle using the Koornwinder construction, which will share this property with the polynomials $U^{(\vec \alpha,\vec \beta,\gamma)}_{\vec n,\vec k}(x,y)$ despite the differences in their construction.

    From now on, we assume that, for $i\ne j$,  either $\alpha_j-\alpha_i\notin\ZZ$ when $i\ne j$ or $\beta_j-\beta_i\notin\ZZ$.

    \subsubsection{MOPs on the Triangle via the Koornwinder construction}\label{ssec:koornwinder-triangle}
    
    Following Section~\ref{ssec:koornwinder-case-2}, we set
    \begin{equation*}
        \omega_j(x)=x^{\alpha_j}(1-x)^{\beta_j+\gamma}, \quad x\in[0,1],\qquad \sigma_j(x)=x^{\beta_j}(1-x)^\gamma,\quad x\in[0,1],\qquad j=1,\dots,r,
    \end{equation*}
    and $\rho(x)=1-x$. Thus, we need MOPs  with respect to the weights $x^{\alpha_j}(1-x)^{\beta_j+\gamma+|\vec k|+k_j+1}$, which were introduced in Section~\ref{subsubsec:jacobi_pineiro}, Case 3. We denote the corresponding Type~II MOP by $P^{(\vec\alpha,\widetilde\beta)}_{\vec n-\vec k}(x)$, where the components of $\widetilde\beta\in\RR^r$ are $(\widetilde\beta)_j=\beta_j+\gamma+|\vec k|+k_j+1$. On the other hand, MOPs  with respect to $\sigma_1,\dots,\sigma_r$ are the well-known Jacobi--Pi\~neiro polynomials $P^{(\vec \beta,\gamma)}_{\vec n}(x)$. 

    By \eqref{eq:koornwinder-multiple-2}, we define the polynomials
    \begin{equation}\label{eq:multiple-triangle-koornwinder}
    \mathbf P^{(\vec\alpha,\vec\beta,\gamma)}_{\vec n,\vec k}(x,y)=P^{(\vec\alpha,\widetilde\beta)}_{\vec n-\vec k}(x) \,(1-x)^{|\vec k|}\,P^{(\vec \beta,\gamma)}_{\vec k}\left(\dfrac{y}{1-x}\right).
    \end{equation}
    By Proposition~\ref{prop:orthogonality-2}, these polynomials are orthogonal to all the polynomials of degree less than $n_j-k_j$ in the variable `$x$' and less than $k_j$ in the variable `$y$' with respect to $W_j$ defined in \eqref{eq:weights-triangle} on $T$. In particular, they are orthogonal to polynomials of total degree less than $n_j$ with respect to the $j$-th weight.

    This construction can be computed explicitly. However, little is known about MOPs  for weights of the type $x^{\alpha_j}(1-x)^{\beta_j}$. Consequently, computing the polynomials \eqref{eq:multiple-triangle-koornwinder}, while feasible, might be computationally costly as $|\vec n-\vec k|$ increases. Note that if we fix $\alpha_j=\alpha>-1$ for every $j$, then both polynomials in \eqref{eq:multiple-triangle-koornwinder} become Jacobi--Pi\~neiro polynomials $P^{(\alpha,\tilde\beta)}_{\vec n-\vec k}$ and $P^{(\vec \beta,\gamma)}_{\vec k}$, which are efficiently computable via the Rodrigues formula \eqref{eq:rodrigues-jp}. The resulting construction is orthogonal to polynomials of degree lower than $n_j-k_j$ on `$x$' and lower than $k_j$ on `$y$' with respect to the weight $x^\alpha\, y^{\beta_j}(1-x-y)^\gamma$ on $T$, $j=1,\dots,r$.

    \subsubsection{A Symmetric Construction of Multiple Orthogonal Polynomials on the Triangle}
    
    Recall $P^{(\vec\alpha,\vec\beta)}_{\vec n}$, the Type~II MOPs  with respect to the weights $x^{\alpha_j}(1-x)^{\beta_j}$, $j=1,\dots,r$ introduced in Section~\ref{subsubsec:jacobi_pineiro}, Case 3. Also, recall the Jacobi--Pi\~neiro polynomials $P^{(\gamma,\vec\alpha)}_{\vec n}$ and suppose that $\alpha_j+\beta_j-\alpha_i-\beta_i\notin\ZZ$ whenever $j\ne i$. Define
    \begin{equation}\label{eq:mop-triangle-J}
        J^{(\vec\alpha,\vec\beta,\gamma)}_{\vec n,\vec k}(x,y)=P^{(\gamma, \vec\beta_{\vec k})}_{\vec n-\vec k}(1-x-y) \,(x+y)^{|\vec k|}\, P^{(\vec\beta,\vec\alpha)}_{\vec k}\left(\dfrac{y}{x+y}\right),
    \end{equation}
    where $(\vec\beta_{\vec k})_j=|\vec k|+k_j+\alpha_j+\beta_j+1$, $j=1,\dots,r$.

    These polynomials are an extension of the so-called Jacobi polynomials on the triangle $J_{n,k}^{\alpha,\beta,\gamma}(x,y)$ introduced in \cite[Eq. (4.1)]{Xu16}, which are popular for exploiting the symmetry of the triangle between the `variables' $x$, $y$ and $1-x-y$. In this multiple case, let us show the orthogonality conditions satisfied by \eqref{eq:mop-triangle-J}.

    \begin{proposition}
        Let $\vec n=(n_1,\dots,n_r),\vec k=(k_1,\dots,k_r)\in\NN_0^r$ be two multi-indices with $0\leq k_j\leq n_j$ and consider the polynomial $J^{(\vec\alpha,\vec\beta,\gamma)}_{\vec n,\vec k}$ introduced in \eqref{eq:mop-triangle-J}. Then,
        \begin{equation}\label{eq:orthogonality-mop-triangle}
            \int_T J^{(\vec\alpha,\vec\beta,\gamma)}_{\vec n,\vec k}(x,y)\, x^t y^s \, W_j(x,y)\, \mathrm dx\,\mathrm dy =0 \qquad \text{ if } t+s<n_j,\qquad j=1,\dots,r,
        \end{equation}
        where the weight $W_j$ is given in \eqref{eq:weights-triangle}.
        
    \end{proposition}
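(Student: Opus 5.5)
The plan is to mimic the proof of Proposition~\ref{prop:orthogonality-2}, but with the change of variables adapted to the symmetric construction, namely
$$
u = x+y,\qquad z=\frac{y}{x+y},\qquad\text{so that}\qquad x=u(1-z),\quad y=uz,\quad \mathrm dx\,\mathrm dy = u\,\mathrm du\,\mathrm dz .
$$
This maps $T$ onto the unit square $[0,1]^2$ in the variables $(u,z)$. Under this change of variables we have $1-x-y=1-u$. The weight \eqref{eq:weights-triangle} becomes
$$
W_j = u^{\alpha_j+\beta_j}(1-u)^{\gamma}\, z^{\beta_j}(1-z)^{\alpha_j},
$$
and the test monomial becomes $x^ty^s=u^{t+s}(1-z)^t z^s$. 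The polynomial \eqref{eq:mop-triangle-J} becomes $P^{(\gamma,\vec\beta_{\vec k})}_{\vec n-\vec k}(1-u)\,u^{|\vec k|}\,P^{(\vec\beta,\vec\alpha)}_{\vec k}(z)$. Hence the integral in \eqref{eq:orthogonality-mop-triangle} factors as $I_u\times I_z$, where
$$
I_z=\int_0^1 P^{(\vec\beta,\vec\alpha)}_{\vec k}(z)\,(1-z)^t z^s\, z^{\beta_j}(1-z)^{\alpha_j}\,\mathrm dz,\qquad
I_u=\int_0^1 P^{(\gamma,\vec\beta_{\vec k})}_{\vec n-\vec k}(1-u)\,u^{|\vec k|+t+s+1}\,u^{\alpha_j+\beta_j}(1-u)^{\gamma}\,\mathrm du .
$$

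First I will treat $I_z$. Since $(1-z)^tz^s$ is a polynomial of degree $t+s$ in $z$, the Case~3 Type~II orthogonality of $P^{(\vec\beta,\vec\alpha)}_{\vec k}$ with respect to the weights $z^{\beta_j}(1-z)^{\alpha_j}$ gives $I_z=0$ whenever $t+s<k_j$. The existence of this polynomial follows from the standing assumption that $\alpha_j-\alpha_i\notin\ZZ$ or $\beta_j-\beta_i\notin\ZZ$.

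It remains to handle $t+s=k_j+\delta$ with $\delta\geq0$. In that case the exponent of $u$ in $I_u$ is
$$
\delta+\bigl(|\vec k|+k_j+\alpha_j+\beta_j+1\bigr)=\delta+(\vec\beta_{\vec k})_j .
$$
Substituting $v=1-u$ turns $I_u$ into
$$
\int_0^1 P^{(\gamma,\vec\beta_{\vec k})}_{\vec n-\vec k}(v)\,(1-v)^\delta\, v^{\gamma}(1-v)^{(\vec\beta_{\vec k})_j}\,\mathrm dv .
$$
This is exactly the Case~2 Jacobi--Pi\~neiro orthogonality \eqref{eq:jp_orth_beta_vec} against a polynomial of degree $\delta$. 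It therefore vanishes when $\delta<n_j-k_j$, that is, when $t+s<n_j$. Combining the two cases proves \eqref{eq:orthogonality-mop-triangle}. By linearity, the same conclusion holds for every polynomial of total degree less than $n_j$.

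The main point to check carefully is the bookkeeping of exponents: the Jacobian factor $u$ together with $u^{|\vec k|}$ and $u^{t+s}$ must combine to produce precisely $(\vec\beta_{\vec k})_j$ plus the excess $\delta$. I also need to confirm that the parameter order in $P^{(\gamma,\vec\beta_{\vec k})}$ matches Case~2, with the fixed exponent $\gamma$ on $v$ and the varying exponents on $1-v$. Finally, I need perfectness of the system $\{v^\gamma(1-v)^{(\vec\beta_{\vec k})_j}\}$. Since
$$
(\vec\beta_{\vec k})_j-(\vec\beta_{\vec k})_i=(\alpha_j+\beta_j-\alpha_i-\beta_i)+(k_j-k_i),
$$
perfectness follows from the hypothesis $\alpha_j+\beta_j-\alpha_i-\beta_i\notin\ZZ$.
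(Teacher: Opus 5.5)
Your proposal is correct and follows essentially the same route as the paper. The paper substitutes $z=1-x-y$, $w=y/(x+y)$ directly, so its $z$ is your $v=1-u$. It then factors the integral into the same two univariate integrals and uses the Case~3 orthogonality of $P^{(\vec\beta,\vec\alpha)}_{\vec k}$ when $t+s<k_j$, and the Case~2 orthogonality of $P^{(\gamma,\vec\beta_{\vec k})}_{\vec n-\vec k}$ against $(1-z)^{\delta}$ otherwise; your exponent bookkeeping and your perfectness check for the shifted parameters $(\vec\beta_{\vec k})_j$ are accurate.
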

    \begin{proof}
        Using the corresponding definitions, the integral to be computed is 
        $$
        I:=\int_T P^{(\gamma,\vec\beta_{\vec k})}_{\vec n-\vec k}(1-x-y) \,(x+y)^{|\vec k|}\, P^{(\vec\beta,\vec \alpha)}_{\vec k}\left(\dfrac{y}{x+y}\right)\, x^t y^s \,x^{\alpha_j}y^{\beta_j}(1-x-y)^\gamma\, \mathrm dx\,\mathrm dy. 
        $$
        We use the change of variables $z=1-x-y$, $w=\dfrac{y}{x+y}$, which leads to $x=(1-z)(1-w)$, $y=w(1-z)$, $x+y=1-z$ and $\mathrm dx\,\mathrm dy = (1-z)\,\mathrm d z\,\mathrm d w$, so that the integral becomes
        $$
        \int_0^1\int_0^1 P^{(\gamma,\vec\beta_{\vec k})}_{\vec n-\vec k}(z)\, (1-z)^{|\vec k|}\,P^{(\vec\beta,\vec \alpha)}_{\vec k}(w)\, z^\gamma\,(1-z)^{t+s+\alpha_j+\beta_j+1}\; w^{b+\beta_j}\,(1-w)^{a+\alpha_j}\mathrm dz\,\mathrm dw.
        $$
        This integral can be split into two univariate integrals: one, $I_z$ with respect to $z$ and another, $I_w$ with respect to $w$. Then, $I=I_z\times I_w=0$ if either $I_z=0$ or $I_w=0$, where
        \begin{equation*}
            \begin{aligned}
                I_z&=\int_0^1 P^{(\gamma,\vec\beta_{\vec k})}_{\vec n-\vec k}(z)\, (1-z)^{t+s} z^\gamma (1-z)^{|\vec k|+\alpha_j+\beta_j+1}\,\mathrm d z,\\
                I_w&= \int_0^1 P^{(\vec\beta,\vec \alpha)}_{\vec k}(w)\, w^s(1-w)^t\, w^{\beta_j}(1-w)^{\alpha_j}\,\mathrm d w.
            \end{aligned}
        \end{equation*}
        By Type~II multiple orthogonality of $P^{(\vec\beta,\vec \alpha)}_{\vec k}(w)$ \eqref{eq:jp_orth_both_vec}, we have that $I_w=0$ if $\deg(w^s(1-w)^t)=t+s<k_j$.

        Now assume that $t+s\geq k_j$, \textit{i.e.}, $t+s=k_j+\delta$ with $\delta\geq 0$. Then, $I_z$ reads
        $$
        I_z=\int_0^1 P^{(\gamma,\vec\beta_{\vec k})}_{\vec n-\vec k}(z)\, (1-z)^{\delta} z^\gamma (1-z)^{|\vec k|+k_j+\alpha_j+\beta_j+1}\,\mathrm d z = \int_0^1 P^{(\gamma,\vec\beta_{\vec k})}_{\vec n-\vec k}(z)\, (1-z)^{\delta} z^\gamma (1-z)^{(\vec\beta_{\vec k})_j}\,\mathrm d z.
        $$
        We recover the associated weight of $P^{(\gamma,\vec\beta_{\vec k})}_{\vec n-\vec k}(z)$, so that by \eqref{eq:jp_orth_beta_vec}  $I_z =0$ if $\deg((1-z)^\delta)=\delta<n_j-k_j$. Equivalently, $I_z=0$ (and in turn $I=0$) if $t+s<n_j$.

    \end{proof}

    As in the construction in Section~\ref{ssec:koornwinder-triangle}, despite the fact that the construction of the polynomials $P^{(\vec\alpha,\vec\beta)}_{\vec n}$ is needed, it can be avoided if we fix one of the free parameters $\alpha_j$ or $\beta_j$. Then:

    \begin{itemize}
        \item Assume that $\alpha_1=\cdots=\alpha_r=\alpha>-1$ and $\beta_j-\beta_i\notin\ZZ$ if $i\neq j$. Then,
        $$
        J^{(\alpha,\vec\beta,\gamma)}_{\vec n,\vec k}(x,y)=P^{(\gamma,\vec\beta_{\vec k})}_{\vec n-\vec k}(1-x-y) \,(x+y)^{|\vec k|}\, P^{(\vec\beta, \alpha)}_{\vec k}\left(\dfrac{y}{x+y}\right),
        $$
        where $P^{(\gamma,\vec\beta_{\vec k})}_{\vec n-\vec k}$ and $P^{(\vec\beta, \alpha)}_{\vec k}$ denote the Jacobi--Pi\~neiro polynomials with $x\mapsto1-x$ and the Jacobi--Pi\~neiro polynomials respectively, satisfy \eqref{eq:orthogonality-mop-triangle} with the weights
        $$
        W_j(x,y)=x^\alpha\, y^{\beta_j}\, (1-x-y)^\gamma, \qquad j=1,\dots,r.
        $$
        
        \item  Assume that $\beta_1=\cdots=\beta_r=\beta>-1$ and $\alpha_j-\alpha_i\notin\ZZ$ if $i\neq j$. Then,
        $$
        J^{(\vec\alpha,\beta,\gamma)}_{\vec n,\vec k}(x,y)=P^{(\gamma,\vec\beta_{\vec k})}_{\vec n-\vec k}(1-x-y) \,(x+y)^{|\vec k|}\, P^{(\beta, \vec\alpha)}_{\vec k}\left(\dfrac{y}{x+y}\right),
        $$
        satisfy \eqref{eq:orthogonality-mop-triangle} with the weights
        $$
        W_j(x,y)=x^{\alpha_j}\, y^{\beta}\, (1-x-y)^\gamma, \qquad j=1,\dots,r.
        $$

        \item Assume $r=1$ and  $\alpha_1=\alpha>-1$ and $\beta_1=\beta>-1$. Define the polynomials
        \begin{equation}\label{eq:Jacobi-triangle-Xu}
            J^{(\alpha,\beta,\gamma)}_{n,k}(x,y)=\widetilde P^{(\gamma,2k+\alpha+\beta+1)}_{n-k}(1-x-y) \,(x+y)^{k}\, \widetilde P^{(\beta,\alpha)}_{k}\left(\dfrac{y}{x+y}\right),
        \end{equation}
        where $\widetilde P^{(\beta,\alpha)}_n$ denote the Jacobi polynomials shifted to the interval $[0,1]$, \textit{i.e.}, orthogonal respect to the weight $x^\beta (1-x)^\alpha$ (see \eqref{eq:jacobi_orth_shifted}), introduced in Section~\ref{subsubsec:jacobi_standard}.

    \end{itemize}

    The first two points show that if only one parameter is allowed to vary in the weights \eqref{eq:weights-triangle} while keeping the other two fixed, the construction becomes simpler, since it reduces to Jacobi--Pi\~neiro polynomials, for which there are several efficient computational methods, \textit{e.g.} \eqref{eq:rodrigues-jp}. Nevertheless, this comes at the expense of the broader ability to compute MOPs  efficiently for \eqref{eq:weights-triangle} when both parameters vary.

    Regarding the last item, note that the case $r=1$ of this construction leads to the original construction employed by Xu in \cite{Xu16}. 
    
    In the remainder of this section, we show some properties inherited from the symmetry of the triangle $T$ and the construction \eqref{eq:mop-triangle-J} itself. Indeed, we will study the invariance of these polynomials under permutations of $(x,y,1-x-y)$ and $(\vec\alpha,\vec\beta,\gamma)$. With this purpose, we will use the notation $(a_j)$ to denote a vector $\vec a=(a_1,\dots,a_r)\in\RR$ using just the components. In this way, $\vec\beta_{\vec k}$ is to be denoted by $(|\vec k|+k_j+\alpha_j+\beta_j+1)$.

    By permuting $x\mapsto y$, $y\mapsto1-x-y$, $1-x-y\mapsto x$, we define the polynomial
    \begin{equation}\label{eq:MOP-simplex-L}
        L_{\vec n,\vec k}^{\vec\alpha,\vec\beta,\gamma}(x,y)=P_{\vec n-\vec k}^{(\vec\alpha, (|\vec k|+k_j+\beta_j+\gamma+1))}(x)\,(1-x)^{|\vec k|}\, P^{(\gamma,\vec\beta)}_{\vec k}\left(1-\dfrac{y}{1-x}\right).
    \end{equation}
    Then, this polynomial satisfies the same orthogonality relations as $J_{\vec n,\vec k}^{\vec\alpha,\vec\beta,\gamma}$:
    $$
    \int_T L_{\vec n,\vec k}^{\vec\alpha,\vec\beta,\gamma}(x,y)\, x^t y^s \, W_j(x,y)\,\mathrm dx\, \mathrm d y = 0\qquad\text{ if }t+s<n_j,\quad j=1,\dots,r,
    $$
    where $W_j$ is given in \eqref{eq:weights-triangle}. 

    Since $P^{(\gamma,\vec\beta)}_{\vec k}\left(1-\dfrac{y}{1-x}\right)=(-1)^{|\vec k|}P^{(\vec\beta,\gamma)}_{\vec k}\left(\dfrac{y}{1-x}\right)$ owing to the properties of Jacobi--Pi\~neiro polynomials (see Section~\ref{subsubsec:jacobi_pineiro}, Cases 1 and 2), the polynomial \eqref{eq:MOP-simplex-L} is essentially the same as \eqref{eq:multiple-triangle-koornwinder} up to a sign factor, \textit{i.e.} $ L_{\vec n,\vec k}^{\vec\alpha,\vec\beta,\gamma}(x,y)=(-1)^{|\vec k|}\,\mathbf P^{(\vec\alpha,\vec\beta,\gamma)}_{\vec n,\vec k}(x,y)$.

    Finally, by permuting $x\mapsto 1-x-y$, $y\mapsto x$, $1-x-y\mapsto y$ in \eqref{eq:mop-triangle-J}, we have that
    \begin{equation}
        K_{\vec n,\vec k}^{\vec\alpha,\vec\beta,\gamma}(x,y)=P_{\vec n-\vec k}^{(\vec\beta, (|\vec k|+k_j+\alpha_j+\gamma+1))}(y)\,(1-y)^{|\vec k|}\, P^{(\vec\alpha,\gamma)}_{\vec k}\left(\dfrac{x}{1-y}\right) 
    \end{equation}
    also satisfies
    $$
    \int_T K_{\vec n,\vec k}^{\vec\alpha,\vec\beta,\gamma}(x,y)\, x^t y^s \, W_j(x,y)\,\mathrm dx\, \mathrm d y = 0\qquad\text{ if }t+s<n_j,\quad j=1,\dots,r,
    $$
    In fact, this polynomial could be derived by applying the permutation $(x,y,1-x-y)\longmapsto(y,1-x-y,x)$ together with $(\vec\alpha,\vec\beta,\gamma)\longmapsto(\vec\beta,\gamma,\vec\alpha)$ to $J_{\vec n,\vec k}^{\vec\alpha,\vec\beta,\gamma}$, with the understanding that permuting the parameters may, depending on the situation, lead to different families of Jacobi--Pi\~neiro polynomials. Allowing for this slight abuse of notation, the polynomials $J_{\vec n,\vec k}^{\vec\alpha,\vec\beta,\gamma}$, $L_{\vec n,\vec k}^{\vec\alpha,\vec\beta,\gamma}$, and $K_{\vec n,\vec k}^{\vec\alpha,\vec\beta,\gamma}$ satisfy the following identities:

    \begin{equation}
        \begin{aligned}
            J_{\vec n,\vec k}^{\vec\alpha,\vec\beta,\gamma}(x,y) &= L_{\vec n,\vec k}^{\gamma,\vec\alpha,\vec\beta}(1-x-y,x) & J_{\vec n,\vec k}^{\vec\alpha,\vec\beta,\gamma}(x,y) &= K_{\vec n,\vec k}^{\vec\beta,\gamma,\vec\alpha}(y,1-x-y)
            \\
            L_{\vec n,\vec k}^{\vec\alpha,\vec\beta,\gamma}(x,y) &= J_{\vec n,\vec k}^{\vec\beta,\gamma,\vec\alpha}(y,1-x-y) & L_{\vec n,\vec k}^{\vec\alpha,\vec\beta,\gamma}(x,y) &= K_{\vec n,\vec k}^{\gamma,\vec\alpha,\vec\beta}(1-x-y,x)\\
            K_{\vec n,\vec k}^{\vec\alpha,\vec\beta,\gamma}(x,y) &= J_{\vec n,\vec k}^{\gamma,\vec\alpha,\vec\beta}(1-x-y,x) & K_{\vec n,\vec k}^{\vec\alpha,\vec\beta,\gamma}(x,y) &= L_{\vec n,\vec k}^{\vec\beta,\gamma,\vec\alpha}(y,1-x-y)\\
        \end{aligned}
    \end{equation}\\
   
    To conclude this section, in which several examples of MOPs  on planar domains have been presented, we point out an important observation. While Koornwinder’s method for standard OPs provides a way to build OPs on the unit disk $D=\{(x,y)\in\RR^2: x^2+y^2\leq 1\}$ (see \cite[Section~3.3, Class~II]{Koo75} or \cite[Section~2.6.1]{DX14}), that definition cannot be extended to the multiple orthogonal setting described in Section~\ref{sec:koornwinder}. Indeed, the construction of OPs on the disk relies on combining two Gegenbauer polynomials; however, as discussed in Section~\ref{subsubsec:gegenbauer_standard} and in view of Proposition~\ref{prop:MOP-symmetric}, one cannot obtain a perfect system for Gegenbauer-type weights \eqref{eq:gegenbauer_weight} when the parameter is allowed to vary. Consequently, even though it is in principle possible to define MOPs  on the disk through the Koornwinder method, it would not be feasible to compute the necessary polynomials for all $\vec n\geq \vec k$, which restricts the potential usefulness of this approach. Nevertheless, multiple orthogonal polynomials on the unit disk --- along with extensions to the $d$-dimensional ball and other radial generalizations --- have been studied by the authors in \cite{FV26b}.

    \section{Multiple Orthogonal Polynomials on the Simplex}\label{sec:MOP-simplex}

    In the previous sections, we have discussed standard and multiple orthogonal polynomials on several planar (2-dimensional) domains. This is because the Koornwinder construction is inherently a bivariate method. Nevertheless, in the case of the Triangle (see Section~\ref{ssec:koornwinder-triangle}), the construction is extensible to a higher number of variables \cite[Section~5.3]{DX14}. In this section, we will show the natural extension to multiple orthogonality of the classical OP on the so-called `Simplex' via the Koornwinder method.

    As a $d$-variate extension of the triangle $T$, consider the domain
    $$
    T^d = \{\mathbf x =(x_1,\dots,x_d)\in\RR^d\, : \, x_1,\dots,x_d\geq 0,\, 0\leq x_1+\cdots+x_d\leq 1\}.
    $$
    
    For the sake of simplicity, only the case $r=2$ will be studied, although we show that it extends readily to an arbitrary number of weights.

    Consider $\mathbf x =(x_1,\dots,x_d)\in T^d$. If $\mathbf a =(a_1,\dots,a_d)\in\NN_0^d$, then we will denote a $d$-dimensional monomial as $(\mathbf x)^{\mathbf a}=x_1^{a_1}\cdots x_d^{a_d}$. Also, if $\mathbf u=(u_1,\dots,u_d)\in\RR^d$, we will use the $\ell^1$ norm of a vector $|\mathbf u|=u_1+\cdots+u_d$ and the truncations $\mathbf u^i:=(u_i,\dots,u_d)\in\RR^{d-i+1}$, $\mathbf u_i:=(u_1,\dots,u_i)\in\RR^i$, so that $(\mathbf x_i)^{\mathbf a_i}=x_1^{a_1}\cdots x_i^{a_i}$. 

    Let $\alpha_1,\dots,\alpha_d,\alpha_{d+1}>-1$, $\beta_1,\dots,\beta_d,\beta_{d+1}>-1$ and denote $\vec\alpha=(\alpha_1,\dots,\alpha_{d+1})\in\RR^{d+1}$, $\vec\beta=(\beta_1,\dots,\beta_{d+1})\in\RR^{d+1}$. Using the previous notation $\vec\alpha^i = (\alpha_i,\dots,\alpha_{d+1})$ and $\vec\alpha_i = (\alpha_1,\dots,\alpha_{i})$. We define the following weights
    \begin{equation}\label{eq:weights-simplex-d}
    \begin{aligned}
        W_{\vec\alpha}(\mathbf x)&=(\mathbf x)^{\vec\alpha_d}(1-|\mathbf x|)^{\alpha_{d+1}} =x_1^{\alpha_1}\cdots x_d^{\alpha_d}(1-|\mathbf x|)^{\alpha_{d+1}},\\
        W_{\vec\beta}(\mathbf x)&=(\mathbf x)^{\vec\beta_d}(1-|\mathbf x|)^{\beta_{d+1}} = x_1^{\beta_1}\cdots x_d^{\beta_d}(1-|\mathbf x|)^{\beta_{d+1}},
    \end{aligned}
    \end{equation}
    both supported on $T^d$. For $\mathbf n = (n_1,\dots,n_d), \mathbf m =(m_1,\dots,m_d)\in\NN_0^d$, define the polynomial
    \begin{equation}
        \label{eq:polynomial-simplex-d}
        P_{\mathbf n,\mathbf m}^{\vec \alpha,\vec\beta}(\mathbf x)=\prod_{i=1}^d(1-|\mathbf x_{i-1}|)^{n_i+m_i}\, P_{(n_i,m_i)}^{(\vec \gamma_i,\vec \lambda_i)}\left(\dfrac{x_i}{1-|\mathbf x_{i-1}|}\right),
    \end{equation}
    where the parameters of the univariate MOPs  -- introduced in Section~\ref{subsubsec:jacobi_pineiro}, Case 3 -- are 
    $$
    \vec\gamma_i=(\alpha_i,\beta_i)\quad\text{ and }\quad\vec\lambda_i =(2|\mathbf n^{i+1}|+|\mathbf m^{i+1}|+|\vec\alpha^{i+1}|+d-i,|\mathbf n^{i+1}|+2|\mathbf m^{i+1}|+|\vec\beta^{i+1}|+d-i),
    $$
    where $|\mathbf n^{d+1}|=|\mathbf m^{d+1}|=0$. In order to get the AT condition for the univariate MOPs  involved in \eqref{eq:polynomial-simplex-d}, assume that $|\vec\alpha^i|-|\vec\beta^i|\notin\ZZ$ for $i=1,\dots,d$. 
    Observe that $P_{\mathbf n,\mathbf m}^{\vec \alpha,\vec\beta}$ is a $d$-variate polynomial of total degree $|\mathbf n|+|\mathbf m|$.  Moreover, these polynomials satisfy the following orthogonality relations. 

    \begin{proposition}
        Let $\mathbf n = (n_1,\dots,n_d), \mathbf m =(m_1,\dots,m_d)\in\NN_0^d$ and consider the polynomial $P_{\mathbf n,\mathbf m}^{\vec \alpha,\vec\beta}(\mathbf x)$ introduced in \eqref{eq:polynomial-simplex-d}. The following orthogonality relations are satisfied: 

        \begin{equation}\label{eq:orthogonality-simplex-d-alpha}
            \int_{T^d} P_{\mathbf n,\mathbf m}^{\vec \alpha,\vec\beta}(\mathbf x)\, (\mathbf x)^{\mathbf a}\, W_{\vec\alpha}(\mathbf x)\,\mathrm d\mathrm x = 0\quad \text{ if } |\mathbf a^i| <|\mathbf n^i|\text{ for any }i\in\{1,\dots,d\}. 
        \end{equation}
        \begin{equation}\label{eq:orthogonality-simplex-d-beta}
            \int_{T^d} P_{\mathbf n,\mathbf m}^{\vec \alpha,\vec\beta}(\mathbf x)\, (\mathbf x)^{\mathbf a}\, W_{\vec\beta}(\mathbf x)\,\mathrm d\mathrm x = 0\quad \text{ if } |\mathbf a^i| <|\mathbf m^i|\text{ for any }i\in\{1,\dots,d\}. 
        \end{equation}
    In particular, these conditions hold whenever $|\mathbf a|<|\mathbf n|$ and $|\mathbf a|<|\mathbf m|$ respectively.
    \end{proposition}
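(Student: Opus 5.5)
The plan is to imitate the proof of Proposition~\ref{prop:orthogonality-2}. We use the classical Koornwinder change of variables on the simplex, which makes the integral factor into a product of $d$ univariate integrals. Then one suitably chosen factor is shown to vanish by Type~II orthogonality of the Jacobi--Pi\~neiro-type polynomials of Case~3. We treat \eqref{eq:orthogonality-simplex-d-alpha}; the relation \eqref{eq:orthogonality-simplex-d-beta} follows verbatim by replacing $\vec\alpha,\mathbf n$ with $\vec\beta,\mathbf m$ and using the second weight of each univariate system.

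First, substitute $x_i=y_i\prod_{l<i}(1-y_l)$ for $i=1,\dots,d$, with $\mathbf y\in[0,1]^d$. One checks by induction that $1-|\mathbf x_{i}|=\prod_{l\le i}(1-y_l)$. Hence $x_i/(1-|\mathbf x_{i-1}|)=y_i$ and $1-|\mathbf x|=\prod_{l\le d}(1-y_l)$. The Jacobian is triangular, with determinant $\prod_{i=1}^{d}(1-y_i)^{d-i}$. Substituting all of this into $P^{\vec\alpha,\vec\beta}_{\mathbf n,\mathbf m}(\mathbf x)\,(\mathbf x)^{\mathbf a}\,W_{\vec\alpha}(\mathbf x)$ and collecting powers of each $1-y_i$, the integrand becomes a product of functions of single variables. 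The integral is therefore $\prod_{i=1}^d I_i$, where
$$
I_i=\int_0^1 P^{(\vec\gamma_i,\vec\lambda_i)}_{(n_i,m_i)}(y)\, y^{a_i+\alpha_i}(1-y)^{|\mathbf n^{i+1}|+|\mathbf m^{i+1}|+|\mathbf a^{i+1}|+|\vec\alpha^{i+1}|+d-i}\,\mathrm dy .
$$
Here the factor $(1-y_i)^{|\vec\alpha^{i+1}|}$ collects $\alpha_{i+1},\dots,\alpha_d$ from the monomials $x_l^{\alpha_l}$ together with $\alpha_{d+1}$ from $(1-|\mathbf x|)^{\alpha_{d+1}}$. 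This exponent bookkeeping is the step that needs the most care, and I expect it to be the main (though routine) obstacle.

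Next, compare this exponent with the first component of $\vec\lambda_i$, namely $2|\mathbf n^{i+1}|+|\mathbf m^{i+1}|+|\vec\alpha^{i+1}|+d-i$. Their difference is $\delta_i:=|\mathbf a^{i+1}|-|\mathbf n^{i+1}|$. Whenever $\delta_i\ge 0$, we can therefore write
$$
I_i=\int_0^1 P^{(\vec\gamma_i,\vec\lambda_i)}_{(n_i,m_i)}(y)\,\bigl[y^{a_i}(1-y)^{\delta_i}\bigr]\, y^{\alpha_i}(1-y)^{(\vec\lambda_i)_1}\,\mathrm dy .
$$
This is an integral against the first weight of the system associated with the parameters $(\vec\gamma_i,\vec\lambda_i)$, which is AT under the standing assumption $|\vec\alpha^i|-|\vec\beta^i|\notin\ZZ$. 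The bracket is a polynomial of degree $a_i+\delta_i=|\mathbf a^i|-|\mathbf n^{i+1}|$. By \eqref{eq:jp_orth_both_vec}, $I_i=0$ as soon as this degree is $<n_i$, that is, when $|\mathbf a^i|<|\mathbf n^i|$.

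Finally, suppose $|\mathbf a^i|<|\mathbf n^i|$ for some $i$, and choose the \emph{largest} such index $i$. By maximality we have $|\mathbf a^{i+1}|\ge|\mathbf n^{i+1}|$; this is trivially true if $i=d$, since $\mathbf n^{d+1}=0$. Thus $\delta_i\ge0$, the previous step applies, and $I_i=0$, so the whole product vanishes. The "in particular" claim follows by taking $i=1$, since $|\mathbf a^1|=|\mathbf a|$ and $|\mathbf n^1|=|\mathbf n|$. For \eqref{eq:orthogonality-simplex-d-beta}, the analogous difference is $|\mathbf a^{i+1}|-|\mathbf m^{i+1}|$ when compared with the second component of $\vec\lambda_i$, and the same argument goes through.
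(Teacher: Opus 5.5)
Your proof is correct and follows essentially the paper's route. The paper performs the same Koornwinder substitution one coordinate at a time (peeling off $x_d$, then $x_{d-1}$, \dots); at each stage either the univariate Case~3 factor vanishes, or the paper records the nonnegative excess $\delta$ and moves on, which is exactly your choice of the largest index $i$ with $|\mathbf a^i|<|\mathbf n^i|$. Your one-shot factorization $\prod_i I_i$, with the explicit exponent $|\mathbf n^{i+1}|+|\mathbf m^{i+1}|+|\mathbf a^{i+1}|+|\vec\alpha^{i+1}|+d-i$, is simply a cleaner bookkeeping of the paper's ``repeating this process $d$ times''.
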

    \begin{proof}
    We focus on \eqref{eq:orthogonality-simplex-d-alpha}, as the proof of \eqref{eq:orthogonality-simplex-d-beta} is analogous. Using the definitions of the polynomial and the weight, we have
    $$
    I:= \int_{T^d} \prod_{i=1}^d(1-|\mathbf x_{i-1}|)^{n_i+m_i}\, P_{(n_i,m_i)}^{(\vec \gamma_i,\vec \lambda_i)}\left(\dfrac{x_i}{1-|\mathbf x_{i-1}|}\right)\,  (\mathbf x)^{\mathbf a+\vec \alpha_d}\,(1-|\mathbf x|)^{\alpha_{d+1}}\,\mathrm d\mathrm x.
    $$
    Applying the change of variables $x_d = y(1-|\mathbf x_{d-1}|)$, $\mathrm d x_d =(1-|\mathbf x_{d-1}|)\,\mathrm d y$, the integral reads
    \begin{multline*}
        \int_{T^{d-1}}\int_0^1 \prod_{i=1}^{d-1}(1-|\mathbf x_{i-1}|)^{n_i+m_i}\, P_{(n_i,m_i)}^{(\vec \gamma_i,\vec \lambda_i)}\left(\dfrac{x_i}{1-|\mathbf x_{i-1}|}\right)\, (1-|\mathbf x_{d-1}|)^{n_d+m_d} P_{(n_d,m_d)}^{(\vec \gamma_d,\vec \lambda_d)}(y)\\ \times  (\mathbf x_{d-1})^{\mathbf a_{d-1}+\vec\alpha_{d-1}}y^{a_d+\alpha_d}(1-|\mathbf x_{d-1}|)^{a_d+\alpha_d}(1-y)^{\alpha_{d+1}}(1-|\mathbf x_{d-1}|)^{\alpha_{d+1}+1}\, \mathrm d\mathbf x_{d-1}\,\mathrm dy ,
    \end{multline*}
    where we used the equality $(1-|\mathbf x_{i}|)^\ell=\left(1-\dfrac{x_i}{1-|\mathbf x_{i-1}|}\right)^\ell (1-|\mathbf x_{i-1}|)^\ell$ for any $i$ and $\ell\in\mathbb R$.

    Then, it is possible to split the integral as the product of an integral on $T^{d-1}$ and an integral in $y$:
    \begin{equation}\label{eq:integrals-orthogonality-simplex-d}
        \begin{aligned}
            I = &\int_{T^{d-1}}\prod_{i=1}^{d-1}(1-|\mathbf x_{i-1}|)^{n_i+m_i} P_{(n_i,m_i)}^{(\vec \gamma_i,\vec \lambda_i)}\left(\dfrac{x_i}{1-|\mathbf x_{i-1}|}\right)\\ &\times (\mathbf x_{d-1})^{\mathbf a_{d-1}+\vec\alpha_{d-1}} \, (1-|\mathbf x_{d-1}|)^{n_d+m_d+a_d+\alpha_d+\alpha_{d+1}+1} \,\mathrm d\mathbf x_{d-1}\\
             \times & \int_0^1 P_{(n_d,m_d)}^{(\vec \gamma_d,\vec \lambda_d)}(y)\, y^{a_d}\, (y^{\alpha_d}(1-y)^{\alpha_{d+1}}) \, \mathrm dy.
        \end{aligned}
    \end{equation}
    As $\vec\gamma_d=(\alpha_d,\beta_d)$ and $\vec\lambda_d =(\alpha_{d+1},\beta_{d+1})$ and we are working with $W_{\vec\alpha}$, the first weight in \eqref{eq:weights-simplex-d}, the second integral vanishes if $a_d<n_d$, equivalently, if $|\mathbf a^d|<|\mathbf n^d|$.

    Now assume that $a_d = n_d +\delta_d$ with $\delta_d\in\NN_0$. If we focus on the first integral of \eqref{eq:integrals-orthogonality-simplex-d}, the one on $T^{d-1}$, it is possible to use the change of variables $x_{d-1}=y(1-|\mathbf x_{d-2}|)$ to get
    \begin{equation}\label{eq:integrals-orthogonality-simplex-d-1}
        \begin{aligned}
            &\int_{T^{d-2}}\prod_{i=1}^{d-2}(1-|\mathbf x_{i-1}|)^{n_i+m_i} P_{(n_i,m_i)}^{(\vec \gamma_i,\vec \lambda_i)}\left(\dfrac{x_i}{1-|\mathbf x_{i-1}|}\right)\\ &\times (\mathbf x_{d-2})^{\mathbf a_{d-2}+\vec\alpha_{d-2}} \, (1-|\mathbf x_{d-2}|)^{|\mathbf n^{d-1}|+|\mathbf m^{d-1}|+|\mathbf a^{d-1}|+|\vec \alpha^{d-1}|+2} \,\mathrm d\mathbf x_{d-2}\\
             & \times  \int_0^1 P_{(n_{d-1},m_{d-1})}^{(\vec \gamma_{d-1},\vec \lambda_{d-1})}(y)\, y^{a_{d-1}}(1-y)^{a_d}\, (y^{\alpha_{d-1}}(1-y)^{n_d+m_d+\alpha_d+\alpha_{d+1}+1}) \, \mathrm dy.
        \end{aligned}
    \end{equation}
    Since we are assuming $a_d = n_d +\delta_d$, then the second integral in \eqref{eq:integrals-orthogonality-simplex-d-1} is
    \begin{equation}\label{eq:integrals-orthogonality-simplex-d-2}
        \int_0^1 P_{(n_{d-1},m_{d-1})}^{(\vec \gamma_{d-1},\vec \lambda_{d-1})}(y)\, y^{a_{d-1}}(1-y)^{\delta_d}\, (y^{\alpha_{d-1}}(1-y)^{2n_d+m_d+\alpha_d+\alpha_{d+1}+1}) \, \mathrm dy.
    \end{equation}
    Recall $\vec \gamma_{d-1}=(\alpha_{d-1},\beta_{d-1})$ and $\vec \lambda_{d-1}=(2n_d+m_d+\alpha_d+\alpha_{d+1}+1,n_d+2m_d+\beta_d+\beta_{d+1}+1)$, so that \eqref{eq:integrals-orthogonality-simplex-d-2} vanishes if $\deg( y^{a_{d-1}}(1-y)^{\delta})=a_{d-1}+\delta<n_{d-1}$, that is, if $a_{d-1}+a_d=|\mathbf a^{d-1}|<n_{d-1}+n_d=|\mathbf n^{d-1}|$.

    Repeating this process $d$ times, we finally get that $I=0$ if $|\mathbf a^{i}|<|\mathbf n^{i}|$ for any $i$.

    \end{proof}

    \section{Multiple Orthogonal Polynomials on the Cone}\label{sec:MOP-cone}

    In Section~\ref{ssec:MOP-wedge}, we introduced MOPs  on the domain known as the wedge $\mathbb V^2$. Indeed, as mentioned, the wedge is the two-dimensional version of the cone, which, using in $\RR^{d+1}$ the notation $(\mathbf x, t)=(x_1,\dots,x_d,t)\in\RR^{d+1}$, is defined as
    \begin{equation}
        \label{eq:cone}
        \mathbb V^d =\{(\mathbf x,t)\in\RR^{d+1}\,:\, \|\mathbf x\|\leq t,\, \mathbf x\in\RR^d,\, t\geq 0\}.
    \end{equation}
    In this $d$-variate case, it is possible to proceed in a similar way, so that using an appropriate change of variables, $\mathbb V^d$ is mapped to $[0,+\infty)\times B^d$. Thus, combining multiple Laguerre polynomials and OPs on the $d$-dimensional unit ball it is possible to define MOPs  on the cone, following the approach described in \cite{ABFX23}. 

    Let $\mu>-1$ and $\beta_1,\dots,\beta_r >1-d$ be such that $\beta_j-\beta_i\notin\ZZ$ if $j\neq i$  and denote by $\vec\beta =(\beta_1,\dots,\beta_r)$. We then define the following weights supported on the cone $\mathbb V^d$.
    \begin{equation}\label{eq:weights-cone}
        W_j(\mathbf x,t)=W^{(\beta_j,\mu)}(\mathbf x, t)= t^{\beta_j}\, e^{-t}\, (t^2-\|\mathbf x\|^2)^\mu,\qquad \mathbf x\in \mathbb V^d.
    \end{equation}
    
    In order to define MOPs  with respect to the weights \eqref{eq:weights-cone}, let $\{P_{\mathbf m}(\mathbf x)\, :\, |\mathbf m| = n\}$ denote a basis of $\mathcal V_n(B^d; W^{(\mu)})$, the space of orthogonal polynomials of degree $n$ with respect to the weight function $W^{(\mu)}(\mathbf x)=(1-\|\mathbf x\|)^\mu$, $\mu>-1$, on the $d$-dimensional ball $B^d=\{\mathbf x\in\RR^d\,:\, \|\mathbf x\|\le 1\}$. Consider the Laguerre MOPs  $L^{\vec\alpha}_{\vec n}(t)$ introduced in Section~\ref{subsubsec:multiple_laguerre}. Let $\vec n,\vec k\in\NN_0^r$ such that $0\leq k_j\leq n_j$ and $\{P_{\mathbf m}\,:\,|\mathbf m|=|\vec k|\}$ a basis of $\mathcal V_{|\vec k|}(B^d;W^{(\mu)})$. Based on the construction in \cite[Eq.~(2.12)]{ABFX23}, we define
    \begin{equation}\label{eq:MOP-cone}
    \begin{aligned}
        \text{\textbf{Type II:}}\quad &\mathbf L^{(\vec\beta,\mu)}_{\vec n,\vec k,\mathbf m}(\mathbf x,t) = L_{\vec n-\vec k}^{(\widetilde\beta)}(t)\, t^{|\vec k|}\, P_{\mathbf m}\left(\dfrac{\mathbf x}{t}\right),\qquad |\mathbf m|=|\vec k|,\\
        \text{\textbf{Type I:}}\quad&\mathbf B^{(\vec\beta,\mu)}_{\vec n,\vec k,\mathbf m,(j)}(\mathbf x,t) = B_{\vec n-\vec k,j}^{(\widetilde\beta)}(t)\, t^{|\vec k|}\, P_{\mathbf m}\left(\dfrac{\mathbf x}{t}\right),\quad j=1,\dots,r, \qquad |\mathbf m|=|\vec k|,\\
    \end{aligned}
    \end{equation}
    where $(\widetilde \beta)_j=\beta_j+2\mu+2|\vec k|+d$, $j=1,\dots,r$.
    
    Now, we show that these polynomials satisfy multiple orthogonal relations with respect to $W_1,\dots,W_r$ defined in \eqref{eq:weights-cone}.

    \begin{proposition}
        Given $\vec n,\vec k\in\NN^r$ such that $0\leq k_j\leq n_j$ and $\mathbf m\in\NN_0^d$ with $|\mathbf m|=|\vec k|$, consider the polynomials $\mathbf L^{(\vec\beta,\mu)}_{\vec n,\vec k,\mathbf m}(\mathbf x,t)$ and  $\mathbf B^{(\vec\beta,\mu)}_{\vec n,\vec k,\mathbf m,(j)}(\mathbf x,t)$, $j=1,\dots,r$. These polynomials satisfy the following multiple orthogonal relations:
        \begin{itemize}
            \item Type II multiple orthogonality:
            \begin{equation}
                \label{eq:MOP-cone-type-ii-orth}
                \int_{\mathbb V^{d+1}} \mathbf L^{(\vec\beta,\mu)}_{\vec n,\vec k,\mathbf m}(\mathbf x,t)\, (\mathbf x)^{\mathbf a_d}t^{a_{d+1}} \, W_j(\mathbf x,t)\, \mathrm d\mathbf x\,\mathrm d t = 0 \qquad\text{ if }|\mathbf a|<n_j+|\vec k|-k_j,\quad j=1,\dots,r.
            \end{equation}
            \item Type I multiple orthogonality:
            \begin{equation}
                \label{eq:MOP-cone-type-i-orth}
                \sum_{j=1}^r\int_{\mathbb V^{d+1}} \mathbf B^{(\vec\beta,\mu)}_{\vec n,\vec k,\mathbf m,(j)}(\mathbf x,t)\, (\mathbf x)^{\mathbf a_d}t^{a_{d+1}} \, W_j(\mathbf x,t)\, \mathrm d\mathbf x\,\mathrm d t = 0 \qquad\text{ if }|\mathbf a|<|\vec n|-1.
            \end{equation}
        \end{itemize}
    \end{proposition}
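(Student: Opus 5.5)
The plan is to copy the proofs of Propositions~\ref{prop:orthogonality-1} and~\ref{prop:orthogonality-1-type-i}, replacing the one-dimensional fibre variable $z=y/\rho(x)$ by a $d$-dimensional one. Over the cone, with $\rho(t)=t$, I would substitute $\mathbf x=t\,\mathbf y$ with $\mathbf y\in B^d$. Then $\mathrm d\mathbf x=t^d\,\mathrm d\mathbf y$, the cone maps to $[0,+\infty)\times B^d$, and the monomial becomes $(\mathbf x)^{\mathbf a_d}=t^{|\mathbf a_d|}(\mathbf y)^{\mathbf a_d}$. The weight factor becomes
\[
(t^2-\|\mathbf x\|^2)^\mu=t^{2\mu}(1-\|\mathbf y\|^2)^\mu .
\]
I read the ball weight $W^{(\mu)}$ as $(1-\|\mathbf y\|^2)^\mu$, which is the weight compatible with \cite{ABFX23}. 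After these substitutions the integral in \eqref{eq:MOP-cone-type-ii-orth} splits as $I=I_t\times I_{\mathbf y}$, where
\[
I_t=\int_0^\infty L^{(\widetilde\beta)}_{\vec n-\vec k}(t)\,t^{|\vec k|+|\mathbf a_d|+a_{d+1}+\beta_j+2\mu+d}\,e^{-t}\,\mathrm dt,
\qquad
I_{\mathbf y}=\int_{B^d}P_{\mathbf m}(\mathbf y)\,(\mathbf y)^{\mathbf a_d}\,(1-\|\mathbf y\|^2)^\mu\,\mathrm d\mathbf y .
\]

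First I treat $I_{\mathbf y}$. Since $P_{\mathbf m}\in\mathcal V_{|\vec k|}(B^d;W^{(\mu)})$, we have $I_{\mathbf y}=0$ whenever $|\mathbf a_d|<|\vec k|$. Otherwise write $|\mathbf a_d|=|\vec k|+\delta$ with $\delta\in\NN_0$. The exponent of $t$ in $I_t$ is then
\[
2|\vec k|+\beta_j+2\mu+d+\delta+a_{d+1}=(\widetilde\beta)_j+\delta+a_{d+1},
\]
so $I_t$ is the integral of $L^{(\widetilde\beta)}_{\vec n-\vec k}$ against the monomial $t^{\delta+a_{d+1}}$ with respect to the $j$-th multiple Laguerre weight $t^{(\widetilde\beta)_j}e^{-t}$. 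This is exactly why $\widetilde\beta$ was chosen as it was. By \eqref{eq:laguerre_orth}, $I_t=0$ if $\delta+a_{d+1}<n_j-k_j$. Substituting $\delta=|\mathbf a_d|-|\vec k|$ turns this into $|\mathbf a|<n_j-k_j+|\vec k|$, which is \eqref{eq:MOP-cone-type-ii-orth}. Unlike the wedge case, no parity argument is needed here: $t^{|\mathbf a_d|}$ is already a genuine power of $t$, so no square roots of polynomials appear.

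For the Type~I relation \eqref{eq:MOP-cone-type-i-orth} I would use the same substitution. The key point is that the factor $I_{\mathbf y}$ does not depend on $j$, because all the $W_j$ share the same ball part. It therefore factors out of the sum, leaving $I_{\mathbf y}\cdot\sum_j\int_0^\infty B^{(\widetilde\beta)}_{\vec n-\vec k,j}(t)\,t^{\delta+a_{d+1}}\,t^{(\widetilde\beta)_j}e^{-t}\,\mathrm dt$. By the Type~I orthogonality \eqref{eq:typeI_orthogonality} this sum vanishes when $\delta+a_{d+1}<|\vec n|-|\vec k|-1$. That condition is equivalent to $|\mathbf a|<|\vec n|-1$, and it is the only case left once $I_{\mathbf y}\neq 0$.

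The main obstacle is bookkeeping rather than any conceptual step. I need the Jacobian $t^d$, the factor $t^{|\vec k|}$ from the polynomial, the factor $t^{2\mu}$ from the weight and the factor $t^{|\vec k|}$ hidden in $t^{|\mathbf a_d|}$ to combine exactly into $(\widetilde\beta)_j$. I must also keep the notation for the cone domain consistent ($\mathbb V^d$ versus $\mathbb V^{d+1}$). Finally, I would check that the perfectness of the system $\{t^{(\widetilde\beta)_j}e^{-t}\}_j$ follows from $\beta_j-\beta_i\notin\ZZ$, since the $(\widetilde\beta)_j$ differ only through the $\beta_j$.
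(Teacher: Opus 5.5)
Your proposal is correct and is essentially the paper's own proof. It uses the substitution $\mathbf x=t\mathbf y$ and the splitting $I=I_t\times I_{\mathbf y}$, kills $I_{\mathbf y}$ when $|\mathbf a_d|<|\vec k|$, and otherwise writes $|\mathbf a_d|=|\vec k|+\delta$ so that $I_t$ becomes a multiple Laguerre integral against $t^{(\widetilde\beta)_j}e^{-t}$; your explicit factoring of the $j$-independent $I_{\mathbf y}$ out of the Type~I sum just spells out what the paper calls ``analogous''. The only blemish is a side remark: the contrast with the wedge is inaccurate, since the wedge also uses $\rho(x)=x$ and needs no parity argument, but this plays no role in your argument.
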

    \begin{proof}
        We start by proving \eqref{eq:MOP-cone-type-ii-orth}. Using the corresponding definitions, the integral is
        \begin{equation*}
            \int_{\mathbb V^{d+1}} L_{\vec n-\vec k}^{(\widetilde\beta)}(t)\, t^{|\vec k|}\, P_{\mathbf m}\left(\dfrac{\mathbf x}{t}\right)\, (\mathbf x)^{\mathbf a_d}t^{a_{d+1}} \,t^{\beta_j}\, e^{-t}(t^2-\|\mathbf x\|^2)^\mu\, \mathrm d\mathbf x\,\mathrm d t. 
        \end{equation*}
        Using the change of variables $\mathbf x = t\,\mathbf z$, $\mathrm d\mathbf x = t^d\,\mathrm d\mathbf z$, which leads to $\|\mathbf x\|^2 = t^2\|\mathbf z\|^2$, we rewrite the integral as
        $$
        \int_0^\infty \int_{B^d} L_{\vec n-\vec k}^{(\widetilde\beta)}(t)\, t^{|\vec k|}\, P_{\mathbf m}(\mathbf z) \,t^{|\mathbf a_d|}(\mathbf z)^{\mathbf a_d} t^{a_{d+1}}\, t^{\beta_j}e^{-t}t^{2\mu}(1-\|\mathbf z\|^2)^\mu\, t^d \, \mathrm d\mathbf z\,\mathrm d t.
        $$
        This integral can be written as the product of two integrals $I_t\times I_{\mathbf z}$, where
        \begin{equation*}
            \begin{aligned}
                I_t &= \int_0^\infty  L_{\vec n-\vec k}^{(\widetilde\beta)}(t)\,t^{|\mathbf a|}t^{\beta_j+2\mu+|\vec k|+d}e^{-t}\,\mathrm d t,\\
                I_{\mathbf z}&=\int_{B^d}P_{\mathbf m}(\mathbf z) (\mathbf z)^{\mathbf a_d}(1-\|\mathbf z\|^2)^\mu \, \mathrm d\mathbf z.
            \end{aligned}
        \end{equation*}
        Using the orthogonality of $P_{\mathbf m}$, we get that $I_{\mathbf z}=0$ if $|\mathbf a_d|<|\mathbf m|=|\vec k|$. Now assume that $|\mathbf a_d| = |\vec k|+\delta$. In that case, $I_t$ reads
        $$
        I_t = \int_0^\infty  L_{\vec n-\vec k}^{(\widetilde\beta)}(t)\,t^{a_{d+1}+\delta}t^{\beta_j+2\mu+2|\vec k|+d}e^{-t}\,\mathrm d t,
        $$
        and using the Type~II multiple orthogonality of multiple Laguerre polynomials \eqref{eq:laguerre_orth}, we deduce that $I_t=0$ whenever $a_{d+1}+\delta<n_j-k_j$, or equivalently, when $|\mathbf a|<n_j+|\vec k|-k_j$.

        The proof of \eqref{eq:MOP-cone-type-i-orth} is analogous. One only needs to replace the Type~II multiple orthogonality relations of $L_{\vec n-\vec k}^{(\widetilde\beta)}(t)$ by the corresponding Type~I relations satisfied by $B_{\vec n-\vec k,j}^{(\widetilde\beta)}(t)$.
        
    \end{proof}

    \section{Conclusions and future work}\label{sec:conclusions}

    With the methodology introduced in Section~\ref{sec:koornwinder}, we provide a systematic, explicit way of defining and computing MOPs  on planar domains, filling a key gap in the development of multiple orthogonal polynomials in the multivariate setting. This construction relies on the properties of standard and multiple families of OPs, leveraging their orthogonality relations to yield a consistent bivariate construction. Moreover, in the method of Section~\ref{ssec:koornwinder-case-1} (Case 1), we have introduced bivariate Type~I MOPs  and proved a biorthogonality relation, bringing the Type~I--Type~II duality  of MOPs  closer to the univariate foundations. 

    With these formulations, MOPs  on different kinds of domains were established. In the case of the triangle $T$, several possibilities for MOPs  combining univariate polynomials beyond the one in terms of Koornwinder's construction have been provided, and the relationships among them have been established. Nevertheless, recall that, whereas univariate OPs are unique up to a multiplicative constant, bivariate OPs are not unique, and neither are MOPs. Thus, some other approaches and definitions of MOPs  on these and other domains remain to be developed. 

    Finally, we explored the $d$-variate setting by providing expressions of MOPs  on the $d$-dimensional simplex $T^d$, which complements the  Rodrigues-type formula proposed in \cite{FFV26}; and the first definition of MOPs  on the cone $\mathbb V^d$.

    As future work, we plan to explore applications or differential properties of polynomials of this kind, using the potential separation of variables given by the construction itself.

	\section*{Acknowledgements}

    The work is partially supported by grants PID2023-149117NB-I00, PID2024-155133NB-I00 and CEX
    2020-001105-M, all funded by ``Ministerio de Ciencia, Innovación y Universidades'' 
    
    \noindent(MICIU/AEI/10.13039/501100011033 and ERDF/EU), Spain.

%	\newpage
	\bibliographystyle{plain}
    \bibliography{refs}{}

\end{document}